\newcommand{\overbar}[1]{\mkern 1.5mu\overline{\mkern-1.5mu#1\mkern-1.5mu}\mkern 1.5mu}
\numberwithin{equation}{section}
\newtheorem{problem}[equation]{Problem}%[section]
\newtheorem{theo}[equation]{Theorem}
\newtheorem{rem}[equation]{Remark}
\newtheorem{defin}[equation]{Definition}
\newtheorem{prop}[equation]{Proposition}
\newtheorem{cor}[equation]{Corollary}
\newtheorem{lema}[equation]{Lemma}
\definecolor{myblue}{RGB}{0,0,255}
\def\conv{\mathop{\rm conv}\nolimits}
\newenvironment{figurehere}
  {\def\@captype{figure}}
  {}
\begin{document}
%\linenumbers

\date{September 20, 2023}
\title{Polytopality of simple games} % Polytopal simple games
\author{
Marinko Timotijevi\'{c}\\ {\small Faculty of Science}\\[-2mm] {\small University of Kragujevac}
\and
Rade T. \v Zivaljevi\' c\\ {\small Mathematical Institute}\\[-2mm] {\small SASA, Belgrade}
\and
Filip D. Jevti\'{c} \\{\small Mathematical Institute}\\[-2mm] {\small SASA,  Belgrade}
}
\maketitle

\begin{abstract}
The Bier sphere $Bier(\mathcal{G}) = Bier(K) := K\ast_\Delta K^\circ$ and the \emph{canonical fan} $Fan(\Gamma) = Fan(K)$ are combinatorial/geometric companions of a \emph{simple game} $\mathcal{G} = (P,\Gamma)$ (equivalently the associated simplicial complex $K$), where $P$ is the set of players, $\Gamma\subseteq 2^P$ is the set of wining coalitions, and $K:= 2^P\setminus \Gamma$ is the simplicial complex of losing coalitions.  We propose and study a general ``Steinitz problem'' for simple games as the problem of characterizing which games
$\mathcal{G}$  are \emph{polytopal (canonically polytopal)} in the sense that the corresponding Bier sphere $Bier(\mathcal{G})$ (fan $Fan(\Gamma)$) can be realized as the boundary sphere (normal fan) of a convex polytope.

We characterize \emph{(roughly) weighted majority games} as the games $\Gamma$ which are \emph{ canonically (pseudo) polytopal} (Theorems \ref{thm:canonical} and \ref{thm:rough-canonical})  and show, by an  experimental/theoretical argument (Theorem \ref{thm:Bier-10-vertices}), that simple games such that $Bier(\mathcal{G})$ is nonpolytopal do not exist in dimension 3. This should be compared to the fact that asymptotically almost all simple games are nonpolytopal and a challenging open problem is to find a nonpolytopal simple game with the smallest number of players.

%In our earlier papers we demonstrated that Bier spheres of \emph{weighted simple games} are polytopal complexes. In this paper we study to what extent the converse is true.
\end{abstract}

\section{Introduction}

\emph{Simple games} (von Neumann and Morgenstern 1944; Shapley 1962; Taylor and Zwicker 1999, etc.) are mathematical objects originally used in mathematical economics (cooperative game theory) to model the distribution of power among coalitions of players.

An important class of simple games are the \emph{weighted majority games}, where each player $i\in P$ is associated a \emph{weight} $w_i \in \mathbb{R}^+$ and the \emph{winning coalitions} are sets $A\subseteq P$ whose total weight is above a certain threshold $q$, prescribed in advance.

From a different point of view simple games $\Gamma\subseteq 2^P$ are nothing but \emph{simplicial complexes} in disguise, where $K := 2^P\setminus \Gamma$ is the collection of all losing  coalitions. In this context weighted majority games correspond to \emph{threshold simplicial complexes} $Tr_{w < q}$.

\medskip
The monographs Taylor and Zwicker 1999 (simple games) and Matou\v sek 2003 and 2008 (topological combinatorics of hypergraphs and simplicial complexes), well illustrate how similar mathematical objects can be analyzed and studied from a completely different point of view. On the other hand the dichotomy between \emph{simple games} and \emph{simplicial complexes} provides a wonderful opportunity for interaction and transfer of ideas and methods from one field to another.

\subsection{Bier spheres, canonical fans and simple games}

Each simplicial complex $K\subsetneq 2^{[n]}$ is associated an $(n-2)$-dimensional, combinatorial sphere on (at most) $2n$-vertices, called \emph{Bier sphere} $Bier(K)$ (named after T. Bier, see \cite[Section 5.6]{matousek_using_2008}).

Combinatorics and geometry of Bier spheres and their generalizations have been studied in numerous publications \cite{bjorner_bier_2004, cukic-blowups-2007, delucchi_fundamental_2020, jevtic_bier_2022, jtz-bier-2019, zivaljevic_unavoidable_2020, jojic_tverberg_2021, longueville_bier_2004, timotijevic_dual_2019}. One of the fundamental results (see \cite[Theorem 3.1]{jtz-bier-2019} and \cite[Theorem 2]{jevtic_bier_2022}) says that each Bier sphere $Bier(K)$ admits a \emph{canonical starshaped embedding} such that the corresponding (canonical) radial fan $Fan(K)$ is a coarsening of the \emph{braid arrangement fan}.

\medskip
In turn each simple game $\Gamma \subsetneq 2^{[n]}$ is also associated a Bier sphere $Bier(\Gamma)= Bier(K)$ (a combinatorial object) and the canonical fan $Fan(K)$ (a geometric object), and our general objective is to relate the combinatorics of simple games to the geometry of $Bier(\Gamma)$ and $Fan(\Gamma)$.

\medskip
It is known \cite[Section 5.6]{matousek_using_2008} that asymptotically, the number of Bier spheres on $2n$ vertices is doubly exponential  in $n$, while only an exponential number of them are polytopal. It is known that nonpolytopal spheres begin to emerge already in dimension $3$ and the two nonpolytopal spheres in this dimension (each with the minimum number of eight vertices) are the Barnette and the Gr\" unbaum-Br\" uckner sphere \cite{ewald_geometry_1996}.

\medskip
There are many interesting classes (hierarchies) of simple games (\cite{taylor_games_2000} \cite{gvozdeva_games_2011}), reflecting their increasing complexity. It is well established that the weighted majority games are the simplest possible and the complexity of other games are often measured by how far are they from being a weighted game.

\medskip
Our conjecture  (partially supported by our earlier results about Bier spheres of threshold complexes) is that there exist a correlation between increasingly (non)weighted simple games and increasingly (non)polytopal Bier spheres. More explicitly, we propose the following working hypothesis as a guide for tests and experiments.

\medskip\noindent
{\bf Guiding principle and the working hypothesis:} There is an interesting analogy between (non)weighted simple games and (non)polytopal Bier spheres (canonical fans). The degree of nonweightedness, measured by the complexity of the corresponding (non)trading certificate (\cite{taylor_games_2000} \cite{gvozdeva_games_2011}) should manifest itself on the level of Bier spheres as a \emph{degree of nonpolytopality}. In particular we seek nonpolytopal Bier spheres among simple games of high  complexity of their non-trading certificates.

\medskip
The following theorems are central theoretical results of the paper.

\begin{theo}\label{thm:canonical}
Suppose that $K\subsetneq 2^{[n]}$ is a proper simplicial complex such that $Vert(K)=[n]$. Then $K$ is a threshold complex (equivalently $\Gamma = 2^{[n]}\setminus K$ is a weighted majority game with all weights strictly positive) if and only if the canonical fan $Fan(K)$ of $K$ is polytopal.
\end{theo}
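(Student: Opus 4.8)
The plan is to realize $Fan(K)$, when $K$ is threshold, as the normal fan of a concrete generalized permutohedron, and conversely to read off weights and a quota from any polytopal realization of $Fan(K)$; throughout I use the cited fact that $Fan(K)$ always coarsens the braid arrangement fan, so that any polytope with normal fan $Fan(K)$ is a generalized permutohedron. For the direction ``$K$ threshold $\Rightarrow$ $Fan(K)$ polytopal'', suppose $K=Tr_{w<q}$ with $w_i>0$. The hypotheses $Vert(K)=[n]$ and $K\subsetneq 2^{[n]}$ are exactly the inequalities $0<w_i<q\le w([n])$, where $w(S):=\sum_{i\in S}w_i$; after a generic perturbation of $q$ we may assume $q\neq w(S)$ for all $S\subseteq[n]$. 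Consider $Q(w,q):=\{x\in\mathbb{R}^n: 0\le x_i\le w_i\ (i\in[n]),\ \sum_{i\in[n]}x_i=q\}$, a generalized permutohedron (indeed a polymatroid base polytope, for the submodular function $S\mapsto\min\{w(S),q\}$). The greedy algorithm applied to a generic functional with coordinates ordered $c_{\sigma(1)}>\dots>c_{\sigma(n)}$ produces the vertex $x^{(A,j)}$ given by $x_i=w_i$ for $i\in A$, $x_j=q-w(A)$, and $x_i=0$ otherwise, where $A=\{\sigma(1),\dots,\sigma(m)\}$ is the unique prefix with $w(A)<q\le w(A\cup\{\sigma(m+1)\})$ and $j=\sigma(m+1)$. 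Hence the vertices of $Q(w,q)$ are indexed precisely by the pairs $(A,j)$ with $A\in K$ and $A\cup\{j\}\notin K$ --- that is, by the facets of $Bier(K)$ --- and the normal cone of $x^{(A,j)}$ is the union of the braid chambers listing $A$ first, then $j$, then the rest. It then remains to check that this coarsening of the braid fan coincides with $Fan(K)$, for instance by verifying that a braid wall indexed by $(B;a,b)$ survives in $\mathcal{N}(Q(w,q))$ if and only if $B\in K$ and $B\cup\{a,b\}\notin K$, and matching this with the description of the canonical embedding of $Bier(K)$ from \cite{jtz-bier-2019,jevtic_bier_2022}; then $Fan(K)=\mathcal{N}(Q(w,q))$ is polytopal.

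Conversely, assume $Fan(K)=\mathcal{N}(P)$. As $Fan(K)$ coarsens the braid fan, $P$ is, up to translation, a generalized permutohedron, so $P=P_z$ for a submodular $z:2^{[n]}\to\mathbb{R}$ with $z(\emptyset)=0$. The standard dictionary between the normal fan of $P_z$ and the pattern of tight submodular inequalities of $z$, together with the wall-survival rule above, says that $z$ is modular at a square $(B;a,b)$ unless $B\in K$ and $B\cup\{a,b\}\notin K$. Since $K$ is a down-set, every square inside $2^A$ with $A\in K$ is of the exceptional type, so $z$ is modular on each such $2^A$; thus $z(S)=\sum_{i\in S}z(\{i\})$ for all $S\in K$. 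Put $w_i:=z(\{i\})$; translating $z$ by a suitable modular function (which alters neither $P_z$ nor its normal fan up to translation) we may assume $w_i>0$. An induction on $|S|$ using $z(S)\le z(S\setminus i)+z(\{i\})$ yields $z\le w$ everywhere, and strict submodularity at the squares $(N\setminus\{a,b\};a,b)$ below a minimal non-face $N$ upgrades this to $z(S)<w(S)$ for every $S\notin K$.

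It remains to produce a quota, i.e. to show $\max_{S\in K}w(S)<\min_{S\notin K}w(S)$; then any $q$ strictly between the two realizes $K=Tr_{w<q}$. This separation is the crux and the step I expect to be the main obstacle. One way is to exploit the remaining rigidity of $z$: the equalities ``$z$ is modular at $(B;a,b)$ whenever $B\notin K$'' propagate to show that $z$ is affine on the whole principal up-set $\{S:S\supseteq B\}$ over any non-face $B$, which together with $z=w$ on $K$, $z<w$ off $K$, and global submodularity forces $z(S)=\min\{w(S),z([n])\}$ and hence the desired inequality with $q=z([n])$. A cleaner packaging goes through Gale/linear-programming duality: the linear system characterizing the submodular functions whose polytope has normal fan $Fan(K)$ is infeasible exactly when $K$ carries a \emph{trading transform} in the sense of Taylor--Zwicker \cite{taylor_games_2000}, and a simple game admits no trading transform precisely when it is a weighted majority game. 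Either route completes the proof.
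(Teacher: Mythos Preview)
The forward direction is correct and is essentially the construction of \cite{jtz-bier-2019} that the paper also cites for this implication. The converse, however, has a real gap. You set $w_i:=z(\{i\})$ and then need $\max_{S\in K}w(S)<\min_{S\notin K}w(S)$; Route~1 claims this follows from $z(S)=\min\{w(S),z([n])\}$, but that formula is false, and in fact the values $z(\{i\})$ need not be threshold weights for $K$ at all. Your own forward construction furnishes a counterexample: for $n=4$ and $K=\{\emptyset,1,2,3,4,\{1,2\}\}$, translate $Q\bigl((1,1,2,2),\,2.5\bigr)$ by $(4,4,0,0)$ to obtain $P=\{x:4\le x_1,x_2\le 5,\ 0\le x_3,x_4\le 2,\ \sum x_i=10.5\}$, which satisfies $\mathcal{N}(P)=Fan(K)$. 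All four of your hypotheses on $z$ hold (modularity on up-sets of non-faces, $z=w$ on $K$, $z<w$ off $K$, submodularity), yet $w=(5,5,2,2)$ gives $w(\{1,2\})=10>4=w(\{3,4\})$ while $\{1,2\}\in K$ and $\{3,4\}\notin K$; and $z(\{3,4\})=2.5\ne 4=\min\{w(\{3,4\}),z([4])\}$. Route~2 does not close the gap either: identifying the LP dual of your $2^n$-variable submodular system with a Taylor--Zwicker trading transform is precisely the content of the equivalence you are proving, and you have not carried it out.

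The paper avoids this by never introducing the full submodular function. It parametrizes polytopes with normal fan $Fan(K)$ by the $2n$ wall-crossing heights $x_i=f(i)$, $y_i=f(\bar i)$ at the rays $\delta_i,\bar\delta_i$ (the $K$-submodular functions of \cite{jevtic_bier_2022}), rewrites the $\Lambda$ and $V$ inequalities as $x(S)>y(S^c)$ for $S\notin K$ and $x(T)<y(T^c)$ for $T\in K$, and then the one-line substitution $z_i:=x_i+y_i$ yields $z(S)>y([n])\Leftrightarrow S\notin K$, i.e.\ $K=Tr_{z<y([n])}$. In your language the correct threshold weight is $z(\{i\})+z([n]\setminus\{i\})-z([n])$, using the heights at \emph{both} $\delta_i$ and $-\delta_i$; in the example above this returns $(1,1,2,2)$, which does separate $K$.
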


\begin{theo}\label{thm:rough-canonical}
 Let $K\subsetneq 2^{[n]}$ be a proper simplicial complex such that $Vert(K)=[n]$. Then $\Gamma = 2^{[n]}\setminus K$ is a \emph{roughly weighted simple game} with all weights strictly positive if and only if the canonical fan $Fan(\Gamma)$ of $\Gamma$ is \emph{pseudo-polytopal} in the sense that it refines the normal fan of a convex polytope.
\end{theo}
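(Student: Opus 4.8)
The plan is to prove both directions in parallel with the proof of Theorem~\ref{thm:canonical}, replacing the strict convexity used there (which forces an equality of fans) by ordinary convexity (which forces only a refinement). I will use throughout the combinatorial description of $Fan(\Gamma)$ read off from the canonical starshaped embedding of $Bier(\Gamma)$ (\cite[Theorem 3.1]{jtz-bier-2019}, \cite[Theorem 2]{jevtic_bier_2022}): the maximal cones of $Fan(\Gamma)$ are indexed by the pairs $(A,j)$ with $A\in K$, $j\in[n]\setminus A$, $A\cup\{j\}\notin K$ (i.e. $A$ losing, $A\cup\{j\}$ winning), and $C_{(A,j)}=\{x\in\mathbb{R}^n:\ x_a\ge x_j\ge x_b\text{ for all }a\in A,\ b\in[n]\setminus(A\cup\{j\})\}$ is the union of those chambers of the braid fan whose linear order has $A$ as an initial segment followed immediately by $j$. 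Thus two braid chambers lie in the same cone of $Fan(\Gamma)$ precisely when the two orders share the same ``$K$-initial segment'' $A$ and the same next element $j$.

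For $(\Rightarrow)$ I would start from a rough weighting: strictly positive weights $w$ and a quota $q$. One may assume $w([n])>q$, since otherwise positivity of $w$ forces $K=\partial\Delta^{n-1}$, so $\Gamma$ is the unanimity game, which is weighted and hence covered by Theorem~\ref{thm:canonical}. Set $z(S):=\min\{w(S),q\}$; a short check shows $z$ is monotone, submodular, and $z(\emptyset)=0$, so the generalized permutohedron $P_z=\{x:\ x(S)\le z(S)\ \forall S,\ x([n])=z([n])=q\}$ (where $x(S):=\sum_{i\in S}x_i$) is well defined and its normal fan coarsens the braid fan. It then suffices to show that all braid chambers inside a fixed maximal cone $C_{(A,j)}$ produce the same greedy vertex of $P_z$. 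Roughness forces $w(A)\le q\le w(A)+w_j$. If $w(A)<q$, the greedy $z$-vertex along any order with initial segment $A$ equals $\sum_{i\in A}w_ie_i+(q-w(A))e_j$, depending only on $(A,j)$; if $w(A)=q$, then strict positivity of $w$ is used to conclude that the greedy vertex is $\sum_{i\in A}w_ie_i$, depending only on $A$. In both cases the vertex is constant on $C_{(A,j)}$, so each maximal cone of $Fan(\Gamma)$ lies in a maximal cone of $\mathcal{N}(P_z)$, and since both are complete fans in $\mathbb{R}^n/\mathbb{R}\mathbf{1}$ this yields that $Fan(\Gamma)$ refines $\mathcal{N}(P_z)$.

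For $(\Leftarrow)$ I would argue as follows. If $Fan(\Gamma)$ refines $\mathcal{N}(P)$ for a full-dimensional polytope $P$, then, as $Fan(\Gamma)$ coarsens the braid fan, so does $\mathcal{N}(P)$; hence $P$ is a generalized permutohedron $P_z$, with submodular support function $z(S)=\max_{x\in P}x(S)$ (normalized by $z(\emptyset)=0$). For each pair $(A,j)$ indexing a maximal cone, all chambers of $C_{(A,j)}$ give the same $z$-greedy vertex; comparing orders that differ by a transposition inside $A$ forces $z$ to be modular on $2^A$, and comparing orders that differ by a transposition in the trailing block forces $z$ to be modular on the interval $[A\cup\{j\},[n]]$. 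Since every facet of $K$ extends to such a pair and (because $Vert(K)=[n]$) every $i$ lies in a facet, the numbers $w_i:=z(\{i\})\ge 0$ are defined and satisfy $z(Y)=w(Y)$ for every $Y\in K$; in particular $w(Y)=z(Y)\le z([n])$ for every losing $Y$. It remains to produce a quota $q$ and, if necessary, a small perturbation making all $w_i>0$, such that $w(X)\ge q\ge w(Y)$ for all winning $X$ and losing $Y$. By Farkas' lemma this is equivalent to ruling out a ``trading certificate'': winning $X_1,\dots,X_k$ and losing $Y_1,\dots,Y_k$ with $\sum_i\mathbf{1}_{X_i}\le\sum_i\mathbf{1}_{Y_i}$ componentwise and $\ne$. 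The plan is to feed such a hypothetical certificate into $z$, using $z(Y_i)=w(Y_i)$, the modularity of $z$ on the up-intervals $[G,[n]]$ for minimal winning $G\subseteq X_i$, and monotonicity/submodularity of $z$, to over-constrain $z$ and contradict its non-modularity (full-dimensionality of $P$).

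I expect the last step of $(\Leftarrow)$ — turning the mere existence of the polytope into a valid rough weighting with strictly positive weights, that is, establishing the inequality $w(X)\ge q$ for winning coalitions (equivalently, the absence of a trading certificate) — to be the main obstacle, just as the analogous ``weighted'' statement is the hard half of Theorem~\ref{thm:canonical}. By contrast, the $(\Rightarrow)$ direction and the losing-coalition inequality in $(\Leftarrow)$ should be essentially bookkeeping once the description of the cones of $Fan(\Gamma)$ is in place.
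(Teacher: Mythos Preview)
Your $(\Rightarrow)$ via the truncated polymatroid $z(S)=\min\{w(S),q\}$ is correct and a pleasant alternative to the paper's route, which instead writes down $f(i)=(1-q)w_i,\ f(\bar j)=qw_j$ and verifies directly that this is a \emph{non-strict $K$-submodular function} (Definition~\ref{def:LKK-submodular}, Proposition~\ref{prop:rough-1}).

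Your $(\Leftarrow)$, however, has a genuine gap, and---contrary to your expectation---in the paper's framework this direction is \emph{not} the hard half. The paper never passes through the $2^n$-valued submodular function of a generalized permutohedron, nor through trading certificates. Given $P$ with $Fan(\Gamma)$ refining $\mathcal N(P)$ (translated so $0\in\mathrm{int}\,P$), it records only the $2n$ support values $x_i:=h_P(\delta_i)$, $y_j:=h_P(\bar\delta_j)$ on the rays of $Fan(\Gamma)$. Convexity of $h_P$ across each wall of $Fan(\Gamma)$ is exactly the non-strict wall-crossing system (Proposition~\ref{prop:wall-crossing-non-strict}), which---by the same Lemma as in the proof of Proposition~\ref{prop:equivalence}---rewrites as $x(S)\ge y(S^c)$ for every winning $S$ and $x(T)\le y(T^c)$ for every losing $T$. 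The substitution $w_i:=x_i+y_i$, $q:=y([n])$ then yields $w(S)\ge q$ and $w(T)\le q$ in one stroke, with $w>0$ coming from $0\in\mathrm{int}\,P$ (Proposition~\ref{prop:rough-2}).

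Translated into your language, the paper's weights are $w_i=z(\{i\})+z([n]\setminus\{i\})-z([n])$, not $z(\{i\})$. With your choice the winning-side inequality genuinely does not drop out, which is why you are pushed toward an unfinished Farkas/certificate argument. The structural point you are missing is that $Fan(\Gamma)$ has only $2n$ rays, so any polytope whose normal fan it refines is governed by $2n$ linear data in which the roughly-weighted conditions are already encoded; the detour through the full submodular function obscures this.
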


\medskip
According to \cite{lutz_combinatorial_2007} there are 247882 combinatorial spheres with 10 vertices and the problem of deciding which of them are (non)polytopal (\cite[Problem 4]{lutz_combinatorial_2007}) is apparently still wide open.

\medskip
Recall that all simplicial 3-spheres with up to 7 vertices are polytopal, and only two
3-spheres with 8 vertices are nonpolytopal, the Gr\" unbaum-Sreedharan sphere and
the Barnette sphere. The classification of triangulated 3-spheres with 9 vertices into
polytopal and nonpolytopal spheres was started by Altshuler and Steinberg and completed by Altshuler, Bokowski, and Steinberg, see \cite{lutz_combinatorial_2007} for the references.

\medskip
The following theorem is our main experimental  result of the paper.

\begin{theo}\label{thm:Bier-10-vertices}
All Bier spheres with up to eleven vertices are polytopal, in particular this holds for all $3$-dimensional Bier spheres. For illustration, there are $88$ non-threshold complexes on $5$ vertices and $48$ corresponding non-isomorphic Bier spheres.  Explicit convex realizations of all  spheres with 10 and 11 vertices can be respectively found in \cite{kv5d3} and \cite{kv5d4}.
\end{theo}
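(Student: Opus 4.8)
The plan is a reduction to a finite, partly computer-assisted verification, with Theorem~\ref{thm:canonical} doing the conceptual heavy lifting. Begin with bookkeeping: for a proper $K\subsetneq 2^{[n]}$ with $Vert(K)=[n]$ and $K\ne\partial\Delta^{[n]}$, the sphere $Bier(K)$ has dimension $n-2$ and exactly $2n-f_{n-1}(K)$ vertices, where $f_{n-1}(K)$ is the number of $(n-1)$-faces of $K$; hence $v\le 11$ forces $n\le 10$. Spheres of dimension $\le 2$ (polygons, and by Steinitz all $2$-spheres) and, in general, spheres with at most $(\dim+3)$ vertices (Mani) are polytopal outright, so the real content lies in the Bier $3$-spheres on $7$ to $10$ vertices --- i.e.\ $K\subsetneq 2^{[5]}$ --- together with the higher-dimensional Bier spheres on at most $11$ vertices.

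For dimension $3$ I would run through all proper $K$ with $Vert(K)=[5]$, up to the $S_5$-action. When $K$ is a threshold complex, Theorem~\ref{thm:canonical} makes the canonical fan $Fan(K)$ the normal fan of a polytope $P$, and the canonical starshaped realization then identifies $Bier(K)$ with the boundary complex of the polar dual $P^{\Delta}$; those Bier spheres are polytopal for free. What remains is the $88$ non-threshold complexes on $5$ vertices, which --- since $K$ and its Alexander dual $K^{\circ}$ yield isomorphic Bier spheres --- give only $48$ isomorphism types. For each of these $48$ I would exhibit explicit vertex coordinates of a simplicial $4$-polytope and verify, by a routine polytope oracle (an exact check of the Grassmann--Pl\"ucker conditions for the candidate point configuration, or \texttt{polymake}), that its boundary is the prescribed sphere; this is the content of \cite{kv5d3}.

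To reduce the higher-dimensional cases to dimension $3$ I would iterate a link--deletion decomposition. When $n\ge 6$ and $v\le 11$ one has $f_{n-1}(K)\ge 1$, so after relabelling $[n]\setminus\{n\}\in K$; then $2^{[n-1]}\subseteq K$, $K=2^{[n-1]}\cup(\{n\}\ast L)$ with $L:=\Lk_K(n)\subsetneq 2^{[n-1]}$ and $K^{\circ}=L^{\circ}$ (Alexander dual inside $[n-1]$), and
\[
 Bier(K)=\bigl(\{n\}\ast Bier(L)\bigr)\cup\bigl(2^{[n-1]}\ast_\Delta L^{\circ}\bigr),
\]
a cone over the Bier sphere $Bier(L)$ --- which has exactly one vertex fewer --- glued along $Bier(L)=\partial B$ to the ball $B:=2^{[n-1]}\ast_\Delta L^{\circ}$, a union of facets of the boundary of the $(n-1)$-dimensional cross-polytope. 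A short count shows the hypothesis $f_{n-1}\ge 1$ persists under iteration as long as the vertex count stays $\le 11$, so every such Bier sphere with $n\ge 6$ is built this way from a Bier $3$-sphere. By the previous step $Bier(L)$ may be taken polytopal, say $Bier(L)=\partial P$; it then suffices to realize $B$ as a \emph{regular} triangulation of a suitable geometric copy of $P$, whereupon the pyramid over $P$ with base facet subdivided accordingly is a polytope with boundary $C_n(Bier(L))\cup B=Bier(K)$, the pyramid apex playing the role of $n$. For the finitely many types that actually occur this is checked explicitly, the $11$-vertex realizations being recorded in \cite{kv5d4}.

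The crux --- and the main obstacle --- is that polytopality in dimension $\ge 3$ affords no shortcut: non-polytopal $3$-spheres already appear with $8$ vertices and the polytopality of $3$-spheres on $10$ vertices is open in general, so for each of the $48$ non-threshold Bier $3$-spheres (and their higher-dimensional companions) one has no option but to manufacture and rigorously certify explicit coordinates --- e.g.\ by numerical optimization over candidate realizations followed by an exact check of the Grassmann--Pl\"ucker conditions, or via \texttt{polymake} --- and this certification is the laborious technical core of the proof. A subsidiary obstacle, inside the recursive step, is to show that the cross-polytope-facet ball $B$ can be realized as a regular triangulation compatibly with some realization of $Bier(L)$; what makes the whole undertaking tractable is that Theorem~\ref{thm:canonical} deletes all threshold complexes from the search in advance.
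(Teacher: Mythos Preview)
For the $3$-dimensional case your plan coincides with the paper's: Theorem~\ref{thm:canonical} disposes of all threshold complexes, and for the $48$ remaining isomorphism types one exhibits and machine-certifies explicit coordinates. What the paper adds is a concrete \emph{search} procedure for those coordinates: start from the canonical realization of $Bier(L)$ for a maximal threshold subcomplex $L\subseteq K$, then pass from $L$ to $K$ one simplex at a time---each addition re-triangulates a single disk on the sphere---and restore convexity at every step by a radial rescaling of the affected vertices, solving a small polynomial system (\ref{eq:SISTEM}).

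For the higher-dimensional spheres you take a genuinely different route. The paper does \emph{not} recurse on links; it simply re-applies the same radial-variation algorithm to the $88$ non-threshold complexes on $[5]$ viewed inside $2^{[6]}$ (with $6$ as a ghost vertex), obtaining the $4$-dimensional realizations of \cite{kv5d4} directly. Your decomposition $Bier(K)=(\{n\}\ast Bier(L))\cup B$ is correct and more structural---in principle it also reaches the $n\ge 7$ instances that the paper's write-up does not spell out---but two details need tightening. First, the vertex drop is $1+|\{\text{ghost vertices of }L\}|$, not always exactly one (this does not break the iteration, since $n$ still decreases by one and the condition $v_m<2m$ persists, but the bookkeeping should say so). Second, a ``pyramid with its base facet subdivided'' is not a polytope; what you actually need---and evidently intend, given your regularity remark---is to realize $B$ as the lower convex hull of a lift of some polytopal $P$ with $\partial P\cong Bier(L)$ (interior vertices of $B$ pushed strictly below the boundary) and then place the apex $n$ far above so that its visible facets are exactly $\{n\}\ast Bier(L)$. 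Since that regular-triangulation step must still be certified case by case, both arguments ultimately rest on the same explicit coordinate lists; the paper trades your inductive scaffolding for a single uniform algorithm.
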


One of the central problems that still remains to be solved is the following.

\begin{problem}\label{prob:unsolved}
  Find a simple game $\Gamma$ (simplicial complex $K$) with the smallest number of players (vertices) such that the corresponding Bier sphere $Bier(\Gamma) = Bier(K)$ is nonpolytopal.
\end{problem}

\subsection{Glossary}\label{sec:glossary}

The terminology used in the paper is quite standard and self-explanatory. For the readers convenience we include a short glossary of  main definitions and a list of key objects and fundamental constructions.

\begin{itemize}
  \item A \emph{simple game} is a family $\Gamma\subseteq 2^P$ such that $P\in \Gamma, \emptyset\notin \Gamma$ and $A\supseteq B\in\Gamma \Rightarrow  A\in\Gamma$.
    \item A simplicial complex $K\subseteq 2^V$ on a set of vertices $V$ is a \emph{down-set} ($A\subset B\in K \Rightarrow A\in K$) such that $\emptyset\in K$.
  \item $K$ is \emph{proper} if $K\subsetneq 2^V$, or in other words if $\Gamma := 2^P\setminus K$ is a simple game.
  \item   $v\in V$ is called a \emph{ghost vertex (a passer)} if $\{v\}\notin K$ ($\Leftrightarrow \{v\}\in\Gamma$).
  \item Alexander dual of $K$ is the complex $K^\circ := \{A\subseteq [n] \mid [n]\setminus A\notin K\}$.
  \item The Bier sphere associated to $K$ is the \emph{deleted join}
$Bier(K):= K \ast_\Delta K^\circ.$
\end{itemize}

 The vertices of $Bier(K):= K \ast_\Delta K^\circ$ are $[n]\cup [\bar{n}] = \{1,\dots, n, \bar{1},\dots, \bar{n}\}$ and a simplex $\tau \in Bier(K)$ is described as the union $\tau = A_1\cup \overline{A_2}$, where $A_1$ and $A_2$ are subsets of $[n]$ such that $A_1\in K, A_2\in K^\circ, A_1\cap A_2=\emptyset$ and (by definition) $\overline{C}:= \{\bar{i} \mid i\in C\}\subseteq [\bar{n}]$.

\medskip
For convenience we often use (as in \cite{jojic_tverberg_2021}) an extended $\tau = (A_1,A_2; B)$ notation for simplices in the Bier sphere, where $B := [n]\setminus (A_1\cup A_2)$. Hence, an ordered partition $A_1\sqcup A_2 \sqcup B = [n]$ corresponds to a simplex $\tau\in Bier(K)$ if and only if $A_1 \in K$, $A_2 \in K^\circ$ (which implies $\emptyset \neq B \neq [n]$).

In the ``interval notation'', used in \cite{bjorner_bier_2004}, the simplex $\tau =  (A_1,A_2; B)$ is recorded as the pair $(A_1, [n]\setminus A_2)$ while the same simplex  is denoted in \cite[Section 5.6]{matousek_using_2008} by $\tau = A_1\uplus A_2$.

\medskip
For example, the facets of $Bier(K)$ are triples $\tau = (A_1, A_2; B)$ where $B = \{\nu\}$ is a singleton. In this case $\tau$ is
(in the interval notation) determined by the pair $A\subsetneq C$, where $A = A_1\in K$ and $C = A_1\cup \{\nu\}\notin K$, or in other words if $A$ is a losing and $C$ a wining coalition.

\begin{itemize}
\item $Fan(K) = Fan(\Gamma)$, the \emph{canonical fan}  of $K$ (respectively $\Gamma$), is a complete, simplicial fan in $H_0 := \{x\in \mathbb{R}^{n} \mid x_1+\dots+ x_n = 0\}$, associated to a simplicial complex $K\subsetneq 2^{[n]}$ (simple game $\Gamma$), introduced in \cite{jtz-bier-2019, jevtic_bier_2022}.
\end{itemize}
The rays ($1$-dimensional cones) of $Fan(K)$ are generated by vectors $\{\delta_1,\dots, \delta_n, \bar{\delta}_1,\dots, \bar{\delta}_n\}$, where $\delta_i = e_i-\frac{1}{n}(e_1+\dots+ e_n)$ and $\bar{\delta}_j := -\delta_j$. It was shown in \cite{jtz-bier-2019, jevtic_bier_2022} that the correspondence $i \mapsto \delta_i, j\mapsto \bar{\delta}_j$ defines a starshaped embedding of $Bier(K)$ into $H_0$ (centered at the origin), with $Fan(K)$ as the corresponding \emph{radial fan}. In particular
\[
      Cone(\tau) = Cone(A_1, A_2; B) := ConvCone\{\delta_i, \bar{\delta}_j \}_{i,j=1}^n
\]
is a cone in $Fan(K)$ if and only if $A_1 \in K$, $A_2 \in K^\circ$ and $A_1\cap A_2 = \emptyset$.

\section{Proof of Theorem \ref{thm:canonical}}\label{sec:canonical}

Our main tool for testing polytopality of a Bier sphere $Bier(K)$ is the method of \emph{wall crossing inequalities} \cite{albertin_removahedral_2020}. This method allows us to decide if a Bier sphere is polytopal with  the corresponding radial fan prescribed in advance and a natural choice for such a fan is the canonical fan $Fan(K)$.
Note however that the method does not provide a characterization. More precisely, if the sphere does not admit a polytopal sphere realization with prescribed radial fan, it might have such a realization with some other radial fan.

Theorem \ref{thm:canonical} says that such a realization is possible with the fan $Fan(K)$ if and only if $K$ is a threshold complex (i.e.\ if $\Gamma = 2^{[n]}\setminus K$ is a weighted majority game).

\subsection{Wall crossing inequalities}
\label{sec:wall-crossing}

The following proposition plays a central role in applications of the method of \emph{wall crossing inequalities}.

\begin{prop}\label{prop:wall-crossing}{\rm (\cite{albertin_removahedral_2020})}
Let $\mathcal{F}$ be an essential complete simplicial fan in
$\mathbb{R}^n$ and $\mathbf{G}$ be the $N\times n$ matrix whose rows are
the rays of $\mathcal{F}$. Then the following are equivalent for any
vector $\mathbf{h} \in \mathbb{R}^N$. \begin{enumerate}[label=(\arabic*)]
     \item[{\rm (1)}] The fan $\mathcal{F}$ is the normal fan of the polytope
$P_{\mathbf{h}}:=\{x \in \mathbb{R}^n \mid \mathbf{G}x \leqslant
\mathbf{h} \}$.
     \item[{\rm (2)}] For any two adjacent chambers $\mathbb{R}_{\geqslant
0}\mathbf{R}$ and $\mathbb{R}_{\geqslant 0}\mathbf{S}$ of $\mathcal{F}$
with $\mathbf{R}\setminus \{r\}=\mathbf{S}\setminus \{s\}$,
     \begin{align}\label{eqn:wall-inequality}
         \alpha\mathbf{h_r}+\beta\mathbf{h_s}+\sum_{\mathbf{t}\in
\mathbf{R}\cap \mathbf{S}} \gamma_{\mathbf{t}}\mathbf{h_t}>0,
     \end{align} where \begin{align}\label{eqn:wall-equality}
         \alpha\mathbf{r}+\beta\mathbf{s}+\sum_{\mathbf{t}\in
\mathbf{R}\cap \mathbf{S}} \gamma_{\mathbf{t}}\mathbf{t}=0
     \end{align}
     is the unique (up to scaling) linear dependence with $\alpha,\beta>0$
between the rays of $\mathbf{R} \cup \mathbf{S}$. \end{enumerate}
\end{prop}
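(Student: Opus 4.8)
\medskip\noindent
\textbf{Proof proposal.}
The plan is to prove the two implications through the standard dictionary between complete simplicial fans, piecewise-linear (PL) functions on $\mathbb{R}^n$, and normal polytopes. First I would record the candidate data: for a chamber $\mathbb{R}_{\geqslant 0}\mathbf{R}$ of $\mathcal{F}$ the rays $\mathbf{R}=\{r_1,\dots,r_n\}$ form a basis (as $\mathcal{F}$ is essential and complete), so the system $\langle r_i,v\rangle=\mathbf{h}_{r_i}$ has a unique solution $v_{\mathbf{R}}$; since a ray shared by two chambers carries a single entry of $\mathbf{h}$, the rule ``$f:=\langle v_{\mathbf{R}},\,\cdot\,\rangle$ on $\mathbb{R}_{\geqslant 0}\mathbf{R}$'' glues to a well-defined continuous, positively homogeneous PL function $f$ with $f(\rho)=\mathbf{h}_\rho$ on every ray $\rho$. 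I would then show that each of (1) and (2) is equivalent to ``$f$ is convex and its maximal domains of linearity are exactly the chambers of $\mathcal{F}$'', and that in this case $f$ is the support function of $P_{\mathbf{h}}$, whence $\mathcal{F}$ is its normal fan.

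For $(1)\Rightarrow(2)$ I would argue geometrically. If $\mathcal{F}$ is the normal fan of $Q:=P_{\mathbf{h}}$ then $Q$ is simple, each ray $\rho$ is the outer normal of a facet $F_\rho=\{x\in Q\mid\langle\rho,x\rangle=\mathbf{h}_\rho\}$, and the vertices of $Q$ are exactly the $v_{\mathbf{R}}$ (with $\mathbb{R}_{\geqslant0}\mathbf{R}$ the normal cone at $v_{\mathbf{R}}$). For adjacent chambers with $\mathbf{R}\setminus\{r\}=\mathbf{S}\setminus\{s\}=:\mathbf{T}$, the vertices $v_{\mathbf{R}},v_{\mathbf{S}}$ span an edge of $Q$ contained in $\bigcap_{t\in\mathbf{T}}F_t$, so $\langle t,v_{\mathbf{R}}\rangle=\langle t,v_{\mathbf{S}}\rangle=\mathbf{h}_t$, while $v_{\mathbf{R}}\notin F_s$ (as $s\notin\mathbf{R}$) gives $\langle s,v_{\mathbf{R}}\rangle<\mathbf{h}_s=\langle s,v_{\mathbf{S}}\rangle$. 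Since $r,s$ lie strictly on opposite sides of the hyperplane $\mathrm{span}(\mathbf{T})$, testing \eqref{eqn:wall-equality} against a functional vanishing on $\mathrm{span}(\mathbf{T})$ shows that in the (essentially unique) dependence $\alpha$ and $\beta$ have the same sign, hence may be taken positive. Pairing \eqref{eqn:wall-equality} with $v_{\mathbf{R}}$ and using $\mathbf{h}_r=\langle r,v_{\mathbf{R}}\rangle$, $\mathbf{h}_s=\langle s,v_{\mathbf{S}}\rangle$, $\mathbf{h}_t=\langle t,v_{\mathbf{R}}\rangle$ gives the identity
\[
\alpha\mathbf{h}_r+\beta\mathbf{h}_s+\sum_{t\in\mathbf{T}}\gamma_t\mathbf{h}_t=\beta\bigl(\mathbf{h}_s-\langle s,v_{\mathbf{R}}\rangle\bigr),
\]
whose right-hand side is positive by the previous sentence; this is \eqref{eqn:wall-inequality}.

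For $(2)\Rightarrow(1)$ I would run this computation in reverse. With $v_{\mathbf{R}},v_{\mathbf{S}}$ defined from the linear systems alone, the displayed identity still holds (now $\mathbf{h}_t=\langle t,v_{\mathbf{R}}\rangle=\langle t,v_{\mathbf{S}}\rangle$, $\mathbf{h}_r=\langle r,v_{\mathbf{R}}\rangle$, $\mathbf{h}_s=\langle s,v_{\mathbf{S}}\rangle$ by definition), so \eqref{eqn:wall-inequality} is equivalent to $\langle s,v_{\mathbf{S}}-v_{\mathbf{R}}\rangle>0$; since $v_{\mathbf{R}}-v_{\mathbf{S}}\perp\mathrm{span}(\mathbf{T})$ this says precisely that the two linear pieces of $f$ meeting along that wall are distinct and patch together convexly. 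Thus (2) asserts that $f$ is strictly convex across every wall of $\mathcal{F}$. I would then invoke the standard fact that a PL function on a complete fan that is convex across every wall is globally convex: joining two chambers by a gallery and propagating the one-wall inequality along it yields $f(y)\geqslant\langle v_{\mathbf{R}},y\rangle$ for every chamber $\mathbb{R}_{\geqslant0}\mathbf{R}$ and every $y$, with equality iff $y\in\mathbb{R}_{\geqslant0}\mathbf{R}$. A finite, positively homogeneous convex function is the support function of $Q:=\{x\mid\langle x,y\rangle\leqslant f(y)\ \forall y\}$, and $Q=\{x\mid\langle\rho,x\rangle\leqslant\mathbf{h}_\rho\text{ for all rays }\rho\}=P_{\mathbf{h}}$ (it suffices to test $y$ on the rays of each chamber, where $f$ is linear; essentiality of $\mathcal{F}$ makes $Q$ bounded). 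The maximal domains of linearity of the support function $h_Q=f$ are, by definition, the maximal cones of the normal fan of $Q$, and by the strictness clause these are exactly the chambers of $\mathcal{F}$. Hence $\mathcal{F}$ is the normal fan of $P_{\mathbf{h}}$.

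The step I expect to be the main obstacle is the passage from ``convex across every wall'' to ``globally convex'' in $(2)\Rightarrow(1)$: the wall inequalities are purely local, and globalizing them uses completeness of $\mathcal{F}$ essentially — one must show that the value obtained by propagating a chamber's linear piece to a given point is independent of the chosen gallery, ruling out a monodromy-type obstruction. By comparison, the identification of chambers with the candidate vertices, the sign of $(\alpha,\beta)$, the boundedness and nonemptiness of $P_{\mathbf{h}}$, and the bookkeeping identity above should all be routine.
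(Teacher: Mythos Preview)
Your argument is sound, but there is nothing in the paper to compare it against: Proposition~\ref{prop:wall-crossing} is quoted verbatim from \cite{albertin_removahedral_2020} and is not proved in this paper. The authors use it as a black box. The only place where a related argument appears is Proposition~\ref{prop:wall-crossing-non-strict}, the non-strict analogue, and even there the nontrivial direction $(2)\Rightarrow(0)$ is deferred to ``the argument of the proof of Lemma~2.1 in \cite{chapoton_associahedron_2002}''. So your self-contained write-up goes well beyond what the paper itself supplies.

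On the substance: your $(1)\Rightarrow(2)$ computation via pairing the linear dependence \eqref{eqn:wall-equality} with the vertex $v_{\mathbf R}$ is exactly the standard one and is correct. For $(2)\Rightarrow(1)$ your reduction to ``locally convex across every wall implies globally convex'' is also the standard route, and you correctly identify this step as the only real content. The gallery formulation you sketch is fine, but the cleanest way to avoid worrying about monodromy is the one implicit in the Chapoton--Fomin--Zelevinsky reference: set $\tilde f(y)=\max_{\mathbf R}\langle v_{\mathbf R},y\rangle$, note $\tilde f\geqslant f$ trivially, and show $\tilde f\leqslant f$ by checking that if the maximum were attained at some $v_{\mathbf R}$ for a point $y$ outside $\mathbb{R}_{\geqslant 0}\mathbf R$, then crossing one wall towards $y$ strictly increases $\langle v_{\mathbf S},y\rangle$ over $\langle v_{\mathbf R},y\rangle$, contradicting maximality. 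This uses completeness only through the existence of such a neighbouring chamber and sidesteps any path-dependence issue.
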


\begin{cor}\label{cor:vazno}
The system of inequalities (\ref{eqn:wall-inequality}) (indexed by pairs of adjacent chambers) has a solution $\mathbf{h'}$ if and only it has a strictly positive solution $\mathbf{h}>0$.
\end{cor}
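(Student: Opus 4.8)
The plan is to prove Corollary \ref{cor:vazno} by a scaling/homogeneity argument together with a compactness trick. The key structural observation is that each wall-crossing inequality (\ref{eqn:wall-inequality}) is \emph{homogeneous of degree one} in the unknown vector $\mathbf{h}$: the left-hand side is a linear form $L_{RS}(\mathbf{h}) = \alpha \mathbf{h}_r + \beta \mathbf{h}_s + \sum_{\mathbf{t}} \gamma_{\mathbf{t}} \mathbf{h}_{\mathbf{t}}$, so $L_{RS}(\lambda \mathbf{h}) = \lambda L_{RS}(\mathbf{h})$ for $\lambda > 0$. Thus the solution set $\mathcal{S} := \{\mathbf{h} \in \mathbb{R}^N \mid L_{RS}(\mathbf{h}) > 0 \text{ for all adjacent pairs } R,S\}$ is an open convex cone (an intersection of open half-spaces through the origin). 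The claim is then exactly: $\mathcal{S} \neq \emptyset$ if and only if $\mathcal{S}$ contains a vector with all coordinates strictly positive.

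One direction is trivial: if a strictly positive solution exists, then $\mathcal{S} \neq \emptyset$. For the converse, suppose $\mathbf{h}' \in \mathcal{S}$. The idea is to perturb $\mathbf{h}'$ towards the all-ones vector $\mathbf{1} = (1,\dots,1) \in \mathbb{R}^N$. First I would observe that $\mathbf{1}$ itself satisfies every inequality (\ref{eqn:wall-inequality}) \emph{as an equality with value zero}, not strictly: indeed, summing the defining relation (\ref{eqn:wall-equality}) coordinate-by-coordinate is not quite the point — rather, the relevant fact is that the canonical fan (and more generally any essential complete fan whose rays come in the structured form used here) has the property that $L_{RS}(\mathbf{1}) = \alpha + \beta + \sum_{\mathbf{t}} \gamma_{\mathbf{t}}$, and for the braid-type fans in play one checks this sum vanishes. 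Hmm — more robustly, and avoiding reliance on that coincidence, the cleanest route is the generic convex-cone argument below, which does not need $\mathbf{1}$ to be a boundary point.

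Here is the argument I would actually carry out. Since $\mathcal{S}$ is a nonempty open convex cone, pick $\mathbf{h}' \in \mathcal{S}$. For $t \in [0,1)$ set $\mathbf{h}(t) := (1-t)\,\mathbf{h}' + t\,\mathbf{1}$. Each $L_{RS}$ is linear, so $L_{RS}(\mathbf{h}(t)) = (1-t)\,L_{RS}(\mathbf{h}') + t\,L_{RS}(\mathbf{1})$. If $L_{RS}(\mathbf{1}) \geqslant 0$ for every adjacent pair, then $L_{RS}(\mathbf{h}(t)) > 0$ for all $t \in [0,1)$ and, since there are finitely many chambers hence finitely many inequalities, for $t$ close enough to $1$ the vector $\mathbf{h}(t)$ has all coordinates arbitrarily close to $1$, in particular strictly positive — done. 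So the whole corollary reduces to the finitary linear-algebra claim that $L_{RS}(\mathbf{1}) \geqslant 0$ for every adjacent pair of chambers, equivalently that in the dependence (\ref{eqn:wall-equality}) one has $\alpha + \beta + \sum_{\mathbf{t}} \gamma_{\mathbf{t}} \geqslant 0$. I expect \textbf{this sign claim to be the main obstacle}: it must be extracted from the normalization of (\ref{eqn:wall-equality}) and the geometry of an essential complete simplicial fan. The way I would settle it: the walls $R\cap S$ span a hyperplane $W$; the two rays $r$ and $s$ lie on opposite open sides of $W$ (that is what makes $R,S$ adjacent chambers of a \emph{complete} fan), which forces $\alpha,\beta$ to have the stated positive signs and pins down the relation up to a positive scalar; then evaluating against any linear functional that is nonnegative on the whole cone $\mathbb{R}_{\geqslant 0}(\mathbf{R}\cup\mathbf{S})$ — e.g.\ one separating this cone from a far point, guaranteed to exist because the fan is pointed/essential — gives $\alpha\,\ell(r) + \beta\,\ell(s) + \sum \gamma_{\mathbf{t}}\,\ell(\mathbf{t}) = 0$ with all terms of controlled sign, and a careful bookkeeping yields $\sum_{\mathbf{t}}\gamma_{\mathbf{t}} \geqslant -(\alpha+\beta)$ unconditionally only after using that the $\gamma_{\mathbf{t}}$ cannot all be too negative relative to the positive $\alpha,\beta$. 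In the specific setting of $Fan(K)$, where the rays are the explicit vectors $\pm\delta_i$ from the Glossary, I would instead just verify the inequality directly from those coordinates, which is a short computation and sidesteps the general sign lemma entirely; I would present the general convex-cone reduction first and then close with that explicit check.
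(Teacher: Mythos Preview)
Your approach has a genuine gap: the sign claim $L_{RS}(\mathbf{1}) = \alpha + \beta + \sum_{\mathbf{t}} \gamma_{\mathbf{t}} \geqslant 0$ is false, even in the specific setting of $Fan(K)$ that you propose to fall back on. For a $\Lambda$-configuration with ridge $(X,Y)$, $Y = X\cup\{c_1,c_2\}$, the rays of the two adjacent chambers are $\{\delta_i : i\in Y\}\cup\{\bar\delta_j : j\in Y^c\}$, and since $\sum_i \delta_i = 0$ and $\bar\delta_j = -\delta_j$, the unique dependence (\ref{eqn:wall-equality}) is $\sum_{i\in Y}\delta_i - \sum_{j\in Y^c}\bar\delta_j = 0$. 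Hence $\alpha=\beta=1$, $\gamma_i = 1$ for $i\in X$, $\gamma_j=-1$ for $j\in Y^c$, and the coefficient sum equals $|Y|-|Y^c| = 2|Y|-n$, which is negative whenever $|Y|<n/2$ (e.g.\ take $X=\emptyset$ and $\{c_1,c_2\}$ a minimal non-face, $n\geqslant 5$). So the interpolation toward $\mathbf{1}$ does not keep you inside the feasible cone, and your separating-functional sketch cannot rescue an inequality that is simply not true.

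The paper's argument is shorter and avoids this entirely. By Proposition~\ref{prop:wall-crossing}, a solution $\mathbf{h}'$ means $\mathcal{F}$ is the normal fan of $P_{\mathbf{h}'}$; pick any interior point $a$ of $P_{\mathbf{h}'}$ and translate: $P_{\mathbf{h}'}-a = P_{\mathbf{h}}$ with $\mathbf{h} = \mathbf{h}' - \mathbf{G}a > 0$, and translation does not change the normal fan. The algebraic content, and the correct replacement for your $\mathbf{1}$-trick, is that $L_{RS}(\mathbf{G}a) = \langle \alpha\mathbf{r}+\beta\mathbf{s}+\sum\gamma_{\mathbf{t}}\mathbf{t},\,a\rangle = 0$ for \emph{every} $a\in\mathbb{R}^n$ by (\ref{eqn:wall-equality}); so the feasible cone is invariant under shifts of the form $\mathbf{h}\mapsto \mathbf{h}-\mathbf{G}a$, and one only needs to choose $a$ with $\mathbf{G}a < \mathbf{h}'$.
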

Indeed, if the system of inequalities (\ref{eqn:wall-inequality}) has a solution $\mathbf{h'}$ then $\mathcal{F}$ is the normal fan of the polytope
$P_{\mathbf{h'}}:=\{x \in \mathbb{R}^n \mid \mathbf{G}x \leqslant
\mathbf{h'} \}$, where all inequalities are essential (corresponding to facets). If $a$ is in the interior of $P_{\mathbf{h'}}$ then the polytope $P_{\mathbf{h'}}-a = P_{\mathbf{h}}$ has the same normal fan as the polytope  $P_{\mathbf{h'}}$ while $\mathbf{h}$ is positive.  \hfill $\square$

\medskip
We apply Proposition \ref{prop:wall-crossing} to the canonical fan $Fan(K)$ (described in Section \ref{sec:glossary}) which arises from the starshaped embedding of the Bier sphere $Bier(K)$, introduced in   \cite{jtz-bier-2019, jevtic_bier_2022}. The main result is the \emph{$K$-submodularity theorem} (Theorem \ref{thm:K-submodular}), see also  \cite[Theorem 19]{jevtic_bier_2022}. As emphasized in \cite{jevtic_bier_2022}, $K$-submodular functions play the role for polytopal Bier spheres similar to the role of classical submodular functions (polymatroids) in the theory of generalized permutohedra, see \cite{castillo_liu}.

\begin{defin}
Given a (proper) simplicial complex $K\subsetneq 2^{[n]}$,  an element $A\in K$ is a \emph{boundary simplex} if\ $(\exists c\in [n])\, A\cup\{c\}\notin K$. Similarly $B\notin K$ is a \emph{boundary non-simplex} if \ $(\exists c\in [n])\, B\setminus \{c\}\in K$. A pair $(A, B')\in (K,2^{[n]}\setminus K)$ is a \emph{boundary pair} if $B' = A\cup\{c\}$ for some $c\in [n]$.
\end{defin}
We already know  that boundary pairs $(A,B')$ correspond to maximal simplices in $Bier(K)$. In the following proposition we describe the \emph{ridges}, as the codimension one simplices in the Bier sphere $Bier(K)$ which are in bijective correspondence with the pairs of adjacent chambers in $Fan(K)$.

\begin{prop}\label{prop:ridges}
The ridges (codimension one simplices) $\tau \in Bier(K)$  have one of the following three forms, exhibited in Figure \ref{ex-3}. Here we use the \emph{interval notation} $\tau = (X,Y)$ (Section \ref{sec:glossary}) where $X\subsetneq Y, X\in K, Y\notin K$ and $(X,Y) \neq (\emptyset, [n])$.
\end{prop}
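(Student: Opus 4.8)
The plan is to enumerate codimension-one faces of $Bier(K)$ directly from the combinatorial description of its simplices, and then to match them against the three families depicted in Figure~\ref{ex-3}. Recall that a simplex of $Bier(K)$ is encoded by an ordered partition $[n] = A_1 \sqcup A_2 \sqcup B$ with $A_1 \in K$ and $A_2 \in K^\circ$; in interval notation this is the pair $(X,Y) = (A_1,\, [n]\setminus A_2)$, so that $X \subseteq Y$, $X \in K$, $Y \notin K$ (the condition $A_2\in K^\circ$ is precisely $[n]\setminus A_2 = Y \notin K$), and $B = Y \setminus X$. The dimension of this simplex is $|A_1| + |A_2| - 1 = (n-1) - |B| = |X| + (n - |Y|) - 1$. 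Since $Bier(K)$ is a sphere of dimension $n-2$, facets correspond to $|B| = 1$ and ridges to $|B| = 2$, i.e.\ to pairs $X \subsetneq Y$ with $X \in K$, $Y \notin K$, $|Y \setminus X| = 2$, and $(X,Y) \neq (\emptyset,[n])$ (the latter only being a genuine restriction when $n = 2$).

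First I would fix such a ridge, writing $Y \setminus X = \{a,b\}$, and consider the three sets that lie strictly between $X$ and $Y$ in the Boolean interval: $X\cup\{a\}$, $X\cup\{b\}$, and — trivially — no others. The ridge $(X,Y)$ is a face of exactly those facets $(X',Y')$ obtained by ``resolving'' the doubleton $\{a,b\}$: either we keep $X' = X$ and shrink $Y$ by removing one of $a,b$ (two facets: $(X, X\cup\{a\})$... wait, that has $Y' = X \cup \{a\} \subsetneq Y$, which is only a facet of $Bier(K)$ if $X\cup\{a\} \notin K$), or we enlarge $X$ by one of $a,b$ and keep $Y' = Y$ (two facets $(X\cup\{a\}, Y)$, $(X\cup\{b\}, Y)$, valid when the larger set is in $K$). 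The membership status in $K$ of the two intermediate sets $X\cup\{a\}$ and $X\cup\{b\}$ (each is either in $K$ or not, since $K$ is a down-set and $X\in K$, $Y\notin K$) thus partitions the ridges into cases, and a short case analysis — depending on how many of $X\cup\{a\}, X\cup\{b\}$ lie in $K$ — yields exactly the three shapes of Figure~\ref{ex-3}. The case where both intermediate sets are in $K$ and the case where exactly one is in $K$ give two of the pictured forms; the case where neither is in $K$ gives the third (here both $a$ and $b$ are, loosely, ``already enough to win over $X$'').

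The main obstacle I anticipate is purely bookkeeping: one must make sure each putative ridge is indeed a codimension-one face of the Bier \emph{sphere} (not merely of the join before deletion) — i.e.\ that its two defining sets genuinely satisfy $X\in K$ and $Y\notin K$ and $X\cap([n]\setminus Y)=\emptyset$ — and, conversely, that the enumeration of intermediate sets is exhaustive, so that the three listed forms really cover all ridges with no overlap. Because $|Y\setminus X| = 2$ there are at most $2^2 = 4$ sets in the interval $(X,Y)$ and at most $\binom{2}{1} = 2$ ways to resolve in each direction, so the case tree is small and finite; the only subtlety is translating each case faithfully into the interval/extended notation of Section~\ref{sec:glossary} and confirming it is one of the configurations drawn in Figure~\ref{ex-3}. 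Once the dictionary between the extended notation $(A_1, A_2; B)$ with $|B| = 2$ and the interval pair $(X,Y)$ with $|Y\setminus X| = 2$ is in place, the statement follows by inspection.
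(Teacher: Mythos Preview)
Your proposal is correct and follows essentially the same route as the paper's proof: identify ridges with interval pairs $(X,Y)$ satisfying $|Y\setminus X|=2$, then classify them according to which of the two intermediate sets $X\cup\{a\}$, $X\cup\{b\}$ lie in $K$, yielding the $\Lambda$ (both in $K$), $V$ (neither in $K$), and $X$ (exactly one in $K$) configurations. Your write-up is considerably more detailed than the paper's three-line argument, but the underlying idea is the same case analysis on the membership status of the intermediate sets.
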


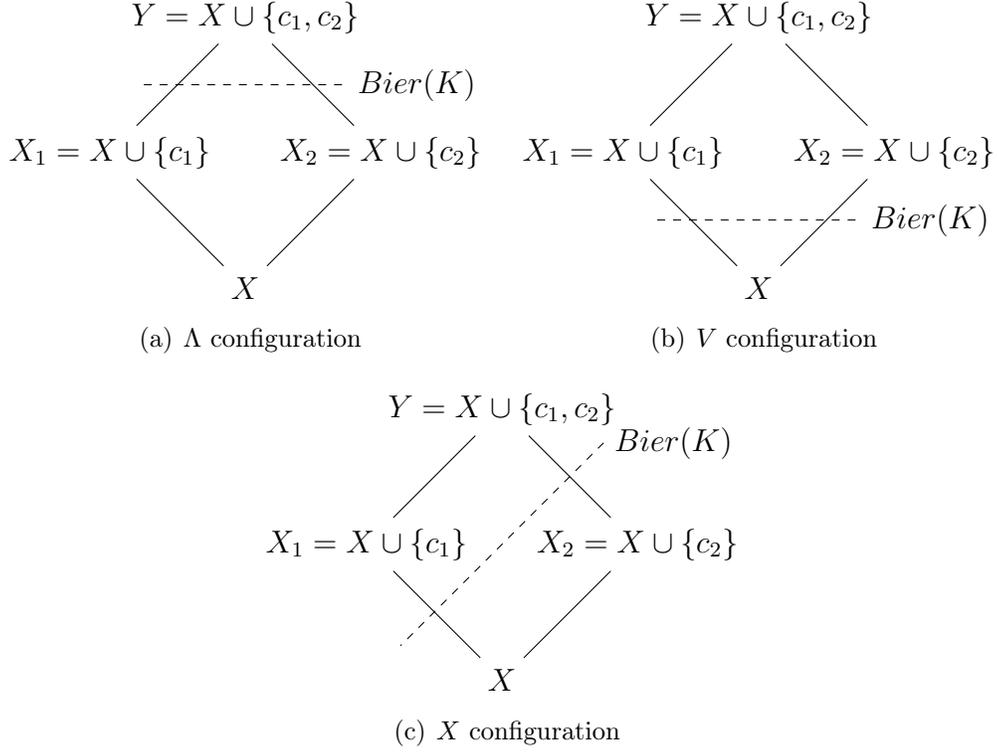
\begin{figure}[htb]
    \centering
    \subfigure[$\Lambda$ configuration]{\begin{tikzpicture}[scale=0.9]
    \node (y) at (0,2) {$Y=X\cup \{c_1,c_2\}$};
    \node (x1) at (-2,0) {$X_1=X\cup \{c_1\}$};
    \node (x2) at (2,0) {$X_2=X\cup \{c_2\}$};
    \node (x) at (0,-2) {$X$};

    \draw (x) -- (x1) -- (y);
    \draw (x) -- (x2) -- (y);

    \draw[dashed] (-1.5,1) -- (1.5,1);
    \node[anchor=west] (bier) at (1.5,1) {$Bier(K)$};
\end{tikzpicture}}
    \subfigure[$V$ configuration]{\begin{tikzpicture}[scale=0.9]
    \node (y) at (0,2) {$Y=X\cup \{c_1,c_2\}$};
    \node (x1) at (-2,0) {$X_1=X\cup \{c_1\}$};
    \node (x2) at (2,0) {$X_2=X\cup \{c_2\}$};
    \node (x) at (0,-2) {$X$};

    \draw (x) -- (x1) -- (y);
    \draw (x) -- (x2) -- (y);

    \draw[dashed] (-1.5,-1) -- (1.5,-1);
    \node[anchor=west] (bier) at (1.5,-1) {$Bier(K)$};
\end{tikzpicture}}
    \subfigure[$X$ configuration]{\begin{tikzpicture}[scale=0.9]
    \node (y) at (0,2) {$Y=X\cup \{c_1,c_2\}$};
    \node (x1) at (-2,0) {$X_1=X\cup \{c_1\}$};
    \node (x2) at (2,0) {$X_2=X\cup \{c_2\}$};
    \node (x) at (0,-2) {$X$};

    \draw (x) -- (x1) -- (y);
    \draw (x) -- (x2) -- (y);

    \draw[dashed] (1.5,1.5) -- (-1.5,-1.5);
    \node[anchor=west] (bier) at (1.5,1.5) {$Bier(K)$};
\end{tikzpicture}}
    \caption{Configurations of maximal adjacent simplices in $Bier(K)$.}
    \label{ex-3}
\end{figure}

\begin{proof} In the interval notation, the ridges in $Bier(K)$ correspond to intervals $(X,Y)$ where $Y= X\cup \{c_1, c_2\}$ and $c_1\neq c_2$. The $\Lambda$-configurations correspond to the case when both $X_1$ and $X_2$ are in $K$, the $V$-configurations  correspond to the case when neither $X_1$ nor $X_2$ are in $K$, and the $X$-configurations arise if precisely one of these sets is in $K$.
\end{proof}

\begin{defin}{\rm (\cite{jevtic_bier_2022})}\label{def:K-submodular}
Let $K\subsetneq 2^{[n]}$ be a simplicial complex and $Bier(K)$ the associated Bier sphere. A \emph{$K$-submodular function} ($K$-wall crossing function) is a function $$f : Vert(Bier(K))\rightarrow \mathbb{R}$$ such that
\begin{align}
f(c_1) + f(c_2) + \Sigma_{i\in X} f(i) > \Sigma_{j\notin Y} f(\bar{j}) & \mbox{\quad {\rm for each $\Lambda$-configuration} } \label{eq:K-1}\\ f(\bar{c}_1) + f(\bar{c}_2) + \Sigma_{j\notin Y} f(\bar{j}) > \Sigma_{i\in X} f(i) & \mbox{\quad {\rm for each $V$-configuration} } \label{eq:K-2}\\
f(c_2) + f(\bar{c}_2) > 0 & \mbox{\quad {\rm for each $X$-configuration}.} \label{eq:K-3}
\end{align}
\end{defin}

\begin{theo}{\rm (\cite[Theorem 19]{jevtic_bier_2022})}\label{thm:K-submodular}
  Let $\mathcal{F}= Fan(K)$ be the canonical fan (Section \ref{sec:glossary}), i.e.\ the radial fan arising from the canonical starshaped realization of the associated Bier sphere $Bier(K)$.  Then $\mathcal{F}$ is a normal fan of a convex polytope if and only if the simplicial complex $K$ admits a $K$-submodular function. Moreover, there is a bijection between convex realizations of $Bier(K)$ with radial fan $\mathcal{F}$ and $K$-submodular functions $f$.
\end{theo}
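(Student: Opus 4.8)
The plan is to apply Proposition~\ref{prop:wall-crossing} directly to the canonical fan $\mathcal{F}=Fan(K)$, regarded as an essential complete simplicial fan in $H_0\cong\mathbb{R}^{n-1}$, whose $2n$ rays are $\delta_1,\dots,\delta_n,\bar\delta_1,\dots,\bar\delta_n$. Thus in Proposition~\ref{prop:wall-crossing} we have $N=2n$, and I identify the parameter space $\mathbb{R}^N$ with the space of functions $f:Vert(Bier(K))\to\mathbb{R}$ via $f(i)=\mathbf{h}_{\delta_i}$ and $f(\bar j)=\mathbf{h}_{\bar\delta_j}$. Under this dictionary, condition~(1) of Proposition~\ref{prop:wall-crossing} says precisely that $\mathcal{F}$ is a normal fan of $P_f:=\{x\in H_0\mid \mathbf{G}x\le f\}$, so the theorem reduces to showing that the wall-crossing system~\eqref{eqn:wall-inequality} for $\mathcal{F}$ \emph{is}, literally, the system~\eqref{eq:K-1}--\eqref{eq:K-3} defining $K$-submodularity. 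Since (as recalled before Proposition~\ref{prop:ridges}) the pairs of adjacent chambers of $Fan(K)$ biject with the ridges of $Bier(K)$, and the ridges split into $\Lambda$-, $V$- and $X$-configurations by Proposition~\ref{prop:ridges}, it suffices to treat one wall per configuration type.

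Fix a ridge, in interval notation $(X,Y)$ with $Y=X\cup\{c_1,c_2\}$, $X\in K$, $Y\notin K$; its cone is spanned by $\mathcal{R}_0:=\{\delta_i\mid i\in X\}\cup\{\bar\delta_j\mid j\notin Y\}$, a set of $n-2$ rays, and the $\mathbf{R}\cap\mathbf{S}$ of Proposition~\ref{prop:wall-crossing} equals $\mathcal{R}_0$. A short check of the four candidate extensions $\delta_{c_1},\delta_{c_2},\bar\delta_{c_1},\bar\delta_{c_2}$ against the facet criterion of Section~\ref{sec:glossary} ($A_1\in K$ and $[n]\setminus A_2\notin K$) shows that the two maximal chambers through the ridge are: $\mathcal{R}_0\cup\{\delta_{c_1}\}$ and $\mathcal{R}_0\cup\{\delta_{c_2}\}$ in a $\Lambda$-configuration; $\mathcal{R}_0\cup\{\bar\delta_{c_1}\}$ and $\mathcal{R}_0\cup\{\bar\delta_{c_2}\}$ in a $V$-configuration; and $\mathcal{R}_0\cup\{\delta_c\}$ and $\mathcal{R}_0\cup\{\bar\delta_c\}$ in an $X$-configuration, where $c\in\{c_1,c_2\}$ is the unique element with $X\cup\{c\}\in K$. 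For each type I then read off the unique (up to scaling) linear dependence~\eqref{eqn:wall-equality} among the rays of the union, using only the two relations $\sum_{k=1}^n\delta_k=0$ and $\bar\delta_k=-\delta_k$: for $\Lambda$ it is $\delta_{c_1}+\delta_{c_2}+\sum_{i\in X}\delta_i-\sum_{j\notin Y}\bar\delta_j=0$; for $V$ it is the rewritten negative $\bar\delta_{c_1}+\bar\delta_{c_2}+\sum_{j\notin Y}\bar\delta_j-\sum_{i\in X}\delta_i=0$; for $X$ it is simply $\delta_c+\bar\delta_c=0$. In all three cases the coefficients on the two swapped rays are $\alpha=\beta=1>0$, and uniqueness holds because any $n-1$ of the $\delta_k$ are linearly independent. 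Substituting these dependences into~\eqref{eqn:wall-inequality} and translating through the dictionary gives exactly~\eqref{eq:K-1} for $\Lambda$, \eqref{eq:K-2} for $V$, and $f(c)+f(\bar c)>0$, i.e.~\eqref{eq:K-3}, for $X$.

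With this identification Proposition~\ref{prop:wall-crossing} yields immediately that $Fan(K)$ is the normal fan of a convex polytope if and only if some $f$ satisfies~\eqref{eq:K-1}--\eqref{eq:K-3}, i.e.\ if and only if $K$ admits a $K$-submodular function; this is the first assertion. For the ``moreover'' part, Proposition~\ref{prop:wall-crossing} assigns to each $K$-submodular $f$ the polytope $P_f$ with normal fan $Fan(K)$, and this map is injective since distinct admissible support-number vectors give distinct polytopes; conversely any polytope with normal fan $Fan(K)$ equals $P_f$ for its own vector $f$ of support numbers, which is then $K$-submodular. Passing to polar duals — using Corollary~\ref{cor:vazno} to translate the origin into the interior so that the dual is bounded — converts this into the asserted bijection between $K$-submodular functions and convex realizations of $Bier(K)$ whose radial fan is $Fan(K)$.

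I expect the genuine content to lie in the middle paragraph: the wall-by-wall bookkeeping of \emph{which} single ray extends a given ridge to a facet on each of the two sides, and then pinning down the signs in the associated linear dependence so that the resulting strict inequality lands precisely on~\eqref{eq:K-1}--\eqref{eq:K-3} with the labelling of Definition~\ref{def:K-submodular} (in particular matching the distinguished vertex $c$ in the $X$-configuration). This is careful rather than hard. A secondary point requiring attention is the precise meaning of ``convex realization with radial fan $Fan(K)$'' and the translation ambiguity in the final bijection, both of which are handled routinely by Corollary~\ref{cor:vazno} once the wall-crossing dictionary is established.
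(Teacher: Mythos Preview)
Your argument is correct and is exactly the approach that the paper's setup (Proposition~\ref{prop:wall-crossing}, Proposition~\ref{prop:ridges}, Definition~\ref{def:K-submodular}) is designed to support; note that the paper itself does not give a proof of Theorem~\ref{thm:K-submodular} but cites it from \cite[Theorem~19]{jevtic_bier_2022}, so you are supplying precisely the computation that the surrounding text leaves implicit. Your identification of the swapped rays in each configuration and the resulting linear dependences is accurate (in particular, in the $X$-configuration the two extra rays are $\delta_c$ and $\bar\delta_c$ for the \emph{same} index $c$, namely the one with $X\cup\{c\}\in K$, matching the paper's labelling in \eqref{eq:KK-3}), and the uniqueness of the dependence in the $X$-case is correctly justified by the linear independence of the ridge rays together with either one of the two new rays.
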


\begin{rem}\label{rem:feasibility}{\em
  Recall that in light of Corollary \ref{cor:vazno}, the condition (\ref{eq:K-3}) can be replaced by a stronger constraint  $f(c_2)>0$ and $f(\bar{c}_2) > 0$, without affecting the consistency (feasibility) of the whole system. }
\end{rem}
 \subsection{The ridges and the corresponding inequalities}\label{sec:listing}

 In order to list all inequalities (\ref{eq:K-1})-(\ref{eq:K-2})-(\ref{eq:K-3}) we need to make a list of all elements of $RidgeBier(K)$ and then classify them by their type
 \[
 RidgeBier(K) = RidgeBier_\Lambda(K) \cup RidgeBier_V(K) \cup RidgeBier_X(K) \, .
 \]
 For this purpose the following, more symmetric, notation is more convenient then the ``interval notation'', used in the previous section.

 \begin{prop}\label{prop:ridges-2}
   A ridge $\tau \in RidgeBier(K)$ can be described as the pair (triple)  $\tau = (X_1, X_2) = (X_1, \{c_1, c_2\}, X_2)$ where
   \[
       X_1 \uplus \{c_1, c_2\} \uplus X_2 = [n]
   \]
 is a partition, such that $X_1\in K$  and $X_2\in K^\circ$.
 \end{prop}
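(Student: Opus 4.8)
The plan is to read the statement off directly from the deleted‑join description of $Bier(K)$ together with a dimension count, and then to match it against the interval‑notation picture of Proposition \ref{prop:ridges}.

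First I would recall from the Glossary that every nonempty face of $Bier(K) = K\ast_\Delta K^\circ$ is encoded by an ordered triple $\tau = (A_1, A_2; B)$ with $A_1\sqcup A_2\sqcup B = [n]$, $A_1\in K$, $A_2\in K^\circ$, and that $\dim\tau = |A_1|+|A_2|-1 = n-|B|-1$. Since $Bier(K)$ is a simplicial $(n-2)$-sphere, its facets are precisely the faces with $|B|=1$ (as already noted in the Glossary, where they are recorded as pairs $A\subsetneq C$), hence its ridges, being the codimension‑one faces, are precisely the faces with $|B|=2$.

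The second step is pure bookkeeping. Writing $B=\{c_1,c_2\}$ with $c_1\neq c_2$ and renaming $X_1:=A_1$, $X_2:=A_2$, a ridge is exactly an ordered partition $[n]=X_1\uplus\{c_1,c_2\}\uplus X_2$ with $X_1\in K$ and $X_2\in K^\circ$; conversely, any ordered partition of this shape satisfying these two membership conditions is a face of $Bier(K)$ of dimension $n-3$, hence a ridge. Note that the constraints $\emptyset\neq B\neq[n]$ are automatic because $|B|=2$ (and implicitly $n\ge 3$, since for $n\le 2$ the sphere $Bier(K)$ has no ridges), and that the unordered pair $\{c_1,c_2\}=[n]\setminus(X_1\cup X_2)$ is recovered from $(X_1,X_2)$, which justifies the notation $\tau=(X_1,X_2)=(X_1,\{c_1,c_2\},X_2)$.

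Finally I would reconcile this with Proposition \ref{prop:ridges} through the dictionary $(A_1,A_2;B)\leftrightarrow(X,Y):=(A_1,[n]\setminus A_2)$ between the symmetric and the interval notation: here $X=X_1\in K$, $Y=[n]\setminus X_2\notin K$ (because $X_2\in K^\circ$), $X\subsetneq Y$, $(X,Y)\neq(\emptyset,[n])$, and $Y\setminus X=B=\{c_1,c_2\}$, so $Y=X\cup\{c_1,c_2\}$ exactly as there; the trichotomy of Figure \ref{ex-3} then corresponds to whether both, neither, or exactly one of $X\cup\{c_1\}$ and $X\cup\{c_2\}$ lies in $K$. There is no real obstacle in this argument — the only care needed is lining up the two notational conventions and disposing of the degenerate small‑$n$ cases.
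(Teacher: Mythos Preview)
Your proof is correct and follows essentially the same reasoning the paper has in mind: the proposition is nothing more than a notational recasting of Proposition~\ref{prop:ridges}, obtained by writing a ridge $(A_1,A_2;B)$ with $|B|=2$ as $(X_1,\{c_1,c_2\},X_2)$. The paper in fact states Proposition~\ref{prop:ridges-2} without proof, treating it as an immediate translation, so your explicit dimension count and dictionary between the interval and symmetric notations simply make that translation precise.
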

 In this notation the inequalities  (\ref{eq:K-1})-(\ref{eq:K-2})-(\ref{eq:K-3}) can be rewritten as follows:
 \begin{align}
x_{c_1} + x_{c_2} + \Sigma_{i\in X_1}\, x_i > \Sigma_{j\in X_2}\, y_j  & \mbox{\quad {\rm if $X_1\cup \{c_\nu\}\in K$}\, $(\nu=1,2)$ }  \label{eq:KK-1}\\
y_{{c_1}} + y_{{c_2}} + \Sigma_{j\in X_2}\, y_j > \Sigma_{i\in X_1}\, x_i &  \mbox{\quad {\rm if $X_2\cup \{c_\nu\}\in K^\circ$} $(\nu=1,2)$ }  \label{eq:KK-2}\\
x_{c_2} + y_{{c_2}} > 0 &\mbox{\quad {\rm if $X_1\cup \{c_2\}\in K$}. } \label{eq:KK-3}
\end{align}
 Note that the $X$-configuration condition in (\ref{eq:KK-3}) is symmetric in the sense that if $X_1\cup \{c_2\}\in K$ and $X_1\cup \{c_1\}\notin K$ then $X_2\cup \{c_2\}\in K^\circ$ and $X_2\cup \{c_1\}\notin K$, and vice versa.

%\begin{theo}\label{thm:equivalence}
 % If a simplicial complex $K\subset 2^{[n]}$ admits $K$-submodular function (Definition \ref{def:K-submodular}) then $K$ is a threshold complex.
%\end{theo}

\begin{prop}\label{prop:equivalence}
  If a simplicial complex $K\subset 2^{[n]}$ without ghost vertices admits a $K$-submodular function (Definition \ref{def:K-submodular}) then $K$ is a threshold complex.
\end{prop}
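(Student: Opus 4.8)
The plan is to manufacture, from a $K$-submodular function $f$, an explicit positive weighting $w$ and threshold $q$ for which $K=Tr_{w<q}$. Suppose first that $Bier(K)$ has all $2n$ vertices $1,\dots,n,\bar1,\dots,\bar n$, which — given that $K$ has no ghost vertices — amounts to $[n]\setminus\{j\}\notin K$ for every $j$; the remaining, degenerate, case is treated at the end. By Theorem~\ref{thm:K-submodular}, $f$ realizes $Fan(K)$ as the normal fan of the polytope
\[
   P \;=\; \bigl\{\,x\in H_0 \ \bigm|\ x_i\le f(i)\ \text{ and }\ -x_i\le f(\bar i)\ \text{ for all } i\in[n]\,\bigr\},
\]
since $\langle\delta_i,x\rangle=x_i$ and $\langle\bar\delta_i,x\rangle=-x_i$ for $x\in H_0$. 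Thus $P$ is a box cut by $H_0$: an $(n-1)$-polytope with exactly one facet per vertex of $Bier(K)$. In particular, for each $i$ the facets $F_i=P\cap\{x_i=f(i)\}$ and $F_{\bar i}=P\cap\{x_i=-f(\bar i)\}$ are distinct nonempty faces of the slab $-f(\bar i)\le x_i\le f(i)$, forcing $w_i:=f(i)+f(\bar i)>0$ for every $i$.

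I would then read off the vertices of $P$. By Theorem~\ref{thm:K-submodular} they are in bijection with the facets of the sphere $Bier(K)$, i.e.\ with the boundary pairs $(A,A\cup\{c\})$ ($A\in K$, $A\cup\{c\}\notin K$); the vertex $v=v_{(A,c)}$ carried by such a pair has $v_i=f(i)$ for $i\in A$, $v_j=-f(\bar j)$ for $j\notin A\cup\{c\}$, and $v_c$ fixed by $\sum_k v_k=0$. Because $Fan(K)$ is \emph{simplicial}, the maximal cone corresponding to this facet is spanned by exactly the $n-1$ rays $\{\delta_i\}_{i\in A}\cup\{\bar\delta_j\}_{j\notin A\cup\{c\}}$, so $v$ is a simple vertex of $P$ lying precisely on those $n-1$ facets; in particular neither $F_c$ nor $F_{\bar c}$ contains it, i.e.\ $-f(\bar c)<v_c<f(c)$ \emph{strictly}. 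Putting $\phi(S):=\sum_{i\in S}f(i)-\sum_{j\notin S}f(\bar j)$ and substituting the value of $v_c$, these two strict inequalities become
\[
   \phi(A\cup\{c\})>0 \quad\text{for every boundary non-face}\ A\cup\{c\},\qquad \phi(A)<0 \quad\text{for every boundary face}\ A .
\]

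The last step is to globalize. Since $\phi(S)=w(S)-C$ with $C:=\sum_j f(\bar j)$ and every $w_i>0$, the set function $\phi$ is strictly increasing. Each $A\in K$ lies in a maximal face $A'$, which is a boundary face because $K$ is proper, so $\phi(A)\le\phi(A')<0$; each $B\notin K$ contains a minimal non-face $B'$, which is a boundary non-face because $\emptyset\in K$, so $\phi(B)\ge\phi(B')>0$. Hence $w(A)<C$ for every $A\in K$ and $w(B)>C$ for every $B\notin K$, so $K=Tr_{w<C}$ with $w>0$ — a threshold complex.

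Finally, the degenerate case $G:=\{\,j\mid[n]\setminus\{j\}\in K\,\}\neq\emptyset$ (so $\bar j\notin Vert(Bier(K))$ for $j\in G$; equivalently $K^\circ$ has ghost vertices). Here every non-face of $K$ contains $G$, so $K$ is determined by $K':=\{\,U\subseteq[n]\setminus G\mid G\cup U\in K\,\}$. The inequalities $v_c<f(c)$ still hold for all $v_{(A,c)}$ (the facet $F_c$ always exists), and $v_c>-f(\bar c)$ holds whenever $c\notin G$, which suffices to recover $\phi(B)>0$ for all non-faces $B$ and $\phi(A)<0$ for all faces $A\supseteq G$ as above; since $\phi$ restricted to sets containing $G$ has the form $w_T(\,\cdot\cap T)-q'$ with $w_T>0$ on $T:=[n]\setminus G$, this says $K'=Tr_{w_T<q'}$, and a positive threshold weighting of $K$ itself is obtained by giving each element of $G$ a sufficiently large weight. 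The point I expect to demand the most care is the strictness in the middle paragraph — that $v_{(A,c)}$ really is simple in the precise sense that $F_c$ and $F_{\bar c}$ miss it, which is exactly what promotes the merely \emph{separating} inequalities $\phi(A)\le0\le\phi(B)$ to the \emph{strict} separation that a threshold complex requires — along with the routine but slightly fiddly bookkeeping of the degenerate case.
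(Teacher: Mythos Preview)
Your argument is correct and lands on exactly the same weights and threshold as the paper: with $x_i=f(i)$, $y_j=f(\bar j)$ your $\phi(S)=x(S)-y(S^c)$, your $w_i=x_i+y_i$ is the paper's $z_i$, and your $C=\sum_j f(\bar j)$ is the paper's quota $y([n])$. The route, however, is genuinely different in presentation. The paper never looks at the polytope $P$ or its vertices; it works directly with the inequality system, proving a small lemma that the local wall--crossing inequalities (the $\Lambda$-, $V$-, $X$-configurations) are solvable iff the ``global'' system $x(S)>y(S^c)$ for $S\notin K$, $x(T)<y(T^c)$ for $T\in K$, $x,y>0$ is solvable (positivity comes from translating $P$ to contain the origin, their Corollary~\ref{cor:vazno}), and then substitutes $z=x+y$. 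You instead read off the same boundary inequalities $\phi(A\cup\{c\})>0$, $\phi(A)<0$ geometrically, from the fact that the vertex $v_{(A,c)}$ of the simple polytope $P$ lies on neither $F_c$ nor $F_{\bar c}$, and get $w_i>0$ from the parallel facets $F_i,F_{\bar i}$. Your geometric picture is more vivid and explains \emph{why} the substitution $z=x+y$ works, while the paper's purely algebraic argument is shorter and sidesteps the degenerate case you had to treat separately (the paper simply chooses the missing $y_j$ freely once positivity is in hand). Both globalize from boundary faces/non-faces to all faces/non-faces via the same monotonicity-in-$w$ step.
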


\proof  In light of Proposition \ref{prop:ridges-2} and inequalities (\ref{eq:KK-1})-(\ref{eq:KK-2})-(\ref{eq:KK-3}), $K$ admits a $K$-submodular function if there exist two vectors $x = (x_1, \dots, x_n)$ and $y = (y_1,\dots, y_n)$ in $\mathbb{R}^n$ which satisfy the inequalities (\ref{eq:KK-1}),  (\ref{eq:KK-2}) and (\ref{eq:KK-3}).

\begin{lema}
 The system of inequalities (\ref{eq:KK-1})-(\ref{eq:KK-2})-(\ref{eq:KK-3}) has a solution if and only if the following system also has a solution:
\begin{align}
x(S) > y(S^c)    & \mbox{\quad {\rm if $S\notin K$ }}  \label{eq:KKK-1}\\
x(T) < y(T^c)    & \mbox{\quad {\rm if $T\in K$ }}  \label{eq:KKK-2}\\
x> 0 \quad  y > 0 &       \label{eq:KKK-3}
\end{align}
where $x(S) = \sum_{i\in S} x_i$ etc., and $S^c = [n]\setminus S$ is the complement of $S$ in $[n]$.
\end{lema}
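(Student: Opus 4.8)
The plan is to prove the two implications separately. \emph{Feasibility of (\ref{eq:KKK-1})--(\ref{eq:KKK-3}) implies feasibility of (\ref{eq:KK-1})--(\ref{eq:KK-3}):} this direction needs no new solution, since a solution $(x,y)$ of the second system already solves the first. Indeed, (\ref{eq:KK-3}) is immediate from $x,y>0$; the inequality (\ref{eq:KK-1}) attached to a $\Lambda$-configuration $\tau=(X_1,\{c_1,c_2\},X_2)$ reads, after collecting terms, $x(Y)>y(Y^c)$ with $Y:=X_1\cup\{c_1,c_2\}$, and this is exactly the instance of (\ref{eq:KKK-1}) at $Y$, which is legitimate because $X_2\in K^\circ$ forces $Y=[n]\setminus X_2\notin K$; similarly (\ref{eq:KK-2}) at a $V$-configuration is $x(X_1)<y(X_1^c)$, the instance of (\ref{eq:KKK-2}) at $X_1\in K$.

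For the converse I would argue via an intermediate, purely combinatorial statement. By Corollary \ref{cor:vazno} we may take a solution of (\ref{eq:KK-1})--(\ref{eq:KK-3}) with $x>0,\ y>0$; set $w:=x+y>0$ and $c:=y([n])$, and use the identity $x(S)-y(S^c)=w(S)-c$, which makes $w$ strictly increasing under inclusion. Evaluating (\ref{eq:KK-1}) at a $\Lambda$-configuration built on a \emph{minimal} non-face $G$ — which is a $\Lambda$-configuration as soon as $|G|\ge 2$, and this holds because $K$ has no ghost vertices — gives $w(G)>c$, hence $w(B)>c$ for every $B\notin K$; evaluating (\ref{eq:KK-2}) at a facet $F$ with $|[n]\setminus F|\ge 2$ gives $w(F)<c$, hence $w(A)<c$ for every $A\in K$ contained in such a facet. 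So $K$ enjoys property $(\star)$: there are $w\in\mathbb{R}^n_{>0}$ and $c\in\mathbb{R}$ with $w(B)>c$ for all $B\notin K$ and $w(A)<c$ for all $A\in K$ lying in a facet of codimension $\ge 2$. Conversely, if $K=Tr_{w'<q'}$ with $w'>0$, then choosing $c$ strictly between $\max_{A\in K}w'(A)$ and $\min_{B\notin K}w'(B)$ and inside $(0,w'([n]))$, and writing $w'=x+y$ with $x,y>0$, $y([n])=c$, one gets $x(S)-y(S^c)=w'(S)-c$ of the correct sign, i.e.\ a solution of (\ref{eq:KKK-1})--(\ref{eq:KKK-3}). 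Thus it suffices to show that $(\star)$ forces $K$ to be a threshold complex with strictly positive weights.

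I would prove $(\star)\Rightarrow$ threshold by induction on $n$, peeling off one vertex per step. If $K$ has a ghost vertex $v$ then $K\subseteq 2^{[n]\setminus\{v\}}$ and $(\star)$ passes to $K$ regarded on $[n]\setminus\{v\}$ (the codimension-$\ge 2$ condition there constrains fewer faces); induction makes $K$ threshold on $[n]\setminus\{v\}$, and a positive-weight representation extends to $[n]$ by giving $v$ a large enough weight. If $K$ has a codimension-one facet $[n]\setminus\{d\}$ with $\{d\}\in K$ — equivalently $2^{[n]\setminus\{d\}}\subseteq K$ — pass to the link $\mathcal{L}_d:=\{S\subseteq[n]\setminus\{d\}\mid S\cup\{d\}\in K\}$, a proper complex on $n-1$ vertices containing $\emptyset$. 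Since the facets of $K$ are $[n]\setminus\{d\}$ together with the sets $F\cup\{d\}$ for $F$ a facet of $\mathcal{L}_d$, the bijection $S\mapsto S\cup\{d\}$ matches non-faces of $\mathcal{L}_d$ with non-faces of $K$ and codimension-$\ge 2$ faces of $\mathcal{L}_d$ with codimension-$\ge 2$ faces of $K$ containing $d$; one checks directly that $\bigl(w|_{[n]\setminus\{d\}},\,c-w_d\bigr)$ witnesses $(\star)$ for $\mathcal{L}_d$, so by induction $\mathcal{L}_d$ is threshold with positive weights, and such a representation lifts to $K$ by assigning $d$ a sufficiently large weight. Finally, if $K$ has neither a ghost vertex nor a codimension-one facet, then every facet has codimension $\ge 2$, so $(\star)$ already gives $\max_{A\in K}w(A)<c<\min_{B\notin K}w(B)$, whence $K=Tr_{w<c}$ with $w>0$ and $c>0$.

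The main obstacle is precisely this inductive step: the wall-crossing inequalities deliver only the weak datum $(\star)$, because the $V$-type inequalities control a face $A\in K$ only when $A$ lies in a facet of codimension $\ge 2$, and they are blind to the codimension-one facets $[n]\setminus\{d\}$ — so the weights $w=x+y$ read off the (\ref{eq:KK-1})--(\ref{eq:KK-3})-solution typically do \emph{not} threshold-represent $K$, and one is forced to replace them via the link reduction. What has to be checked with care is that $(\star)$ genuinely survives passage to the link (and to smaller vertex sets), with the codimension bookkeeping kept straight, and that the ghost vertices which $\mathcal{L}_d$ may acquire — harmless for threshold-ness — are carried correctly through the recursion, together with the small-$n$ base case and the degenerate complexes (e.g.\ full simplices) that can surface after repeated reduction.
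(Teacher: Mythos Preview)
Your argument is correct and takes a genuinely different route from the paper. The paper argues directly that the \emph{same} positive solution $(x,y)$ of (\ref{eq:KK-1})--(\ref{eq:KK-3}) already satisfies (\ref{eq:KKK-1})--(\ref{eq:KKK-3}): for (\ref{eq:KKK-1}) one shrinks $S\notin K$ to a minimal non-face $S_1\subseteq S$ (which, having at least two elements by the no-ghost hypothesis, is the top of a $\Lambda$-configuration), applies (\ref{eq:KK-1}) there, and uses positivity to get $x(S)\ge x(S_1)>y(S_1^c)\ge y(S^c)$; the dual step for (\ref{eq:KKK-2}) is declared ``analogous''. You correctly spotted that it is not quite analogous --- a codimension-one facet $[n]\setminus\{d\}$ is invisible to every $V$-configuration --- and you remedy this with an inductive vertex-peeling that ultimately proves $K$ is threshold, i.e.\ you prove Proposition~\ref{prop:equivalence} outright inside the lemma rather than using the lemma as its reduction step. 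That works (the link bookkeeping you flag does go through), but the paper's direct approach admits a much cheaper patch: whenever $[n]\setminus\{d\}\in K$ the vertex $\bar d$ is a ghost of $K^\circ$, so $y_d$ appears in no constraint (\ref{eq:KK-1})--(\ref{eq:KK-3}) and on the right-hand side of no instance of (\ref{eq:KKK-1}); one may therefore freely enlarge each such $y_d$ above $x([n])$, after which every $T\in K$ either lies in a codimension-$\ge 2$ facet (handled by the honest $V$-argument) or omits some such $d$, giving $y(T^c)\ge y_d>x([n])\ge x(T)$. Your route buys a structural, self-contained argument that makes the obstruction explicit; the paper's (once patched) buys brevity and keeps the lemma in its intended role.
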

\noindent
Indeed, the inequalities (\ref{eq:KK-1})-(\ref{eq:KK-2}) are special cases of the inequalities (\ref{eq:KKK-1})-(\ref{eq:KKK-2}) while, by Remark \ref{rem:feasibility},  (\ref{eq:KK-3}) can be replaced by (\ref{eq:KKK-3}) without changing the consistency of the whole system.

\medskip
Conversely, let us show that, for illustration, (\ref{eq:KKK-1}) follows from (\ref{eq:KK-1}). Indeed, let $S_1 \subseteq S$ be of smallest cardinality such that $S_1\notin K$.
We observe that $S_1 \supseteq\{c_1,c_2\}$ must have at least two elements (otherwise $K$ would have ghost vertices). In turn $S_1 = S_2 \cup \{c_1,c_2\}$ describes a ridge of the type $\Lambda$ and the implication $(\ref{eq:KK-1}) \Rightarrow (\ref{eq:KKK-1})$ follows from
\[
      x(S) \geq x(S_1) > y(S_1^c) \geq y(S^c) \, .
\]
(Note that here we tacitly used (\ref{eq:KKK-3}) instead of (\ref{eq:KK-3}).)
The implications $(\ref{eq:KK-1}) \Rightarrow (\ref{eq:KKK-1})$ and  $(\ref{eq:KK-2}) \Rightarrow (\ref{eq:KKK-2})$ are proved analogously, which completes the proof of the lemma.    \hfill$\square$

\medskip\noindent
\emph{Proof of Proposition \ref{prop:equivalence} (cont.)}
Let $z = x+y$. Since
\[
               y(S^c) = y([n]) - y(S)  \qquad {and} \qquad  y(T^c) = y([n]) - y(T)
\]
the inequalities (\ref{eq:KKK-1}),  (\ref{eq:KKK-2}) and (\ref{eq:KKK-3}) can be rewritten as follows
\begin{align}
z(S) > y([n])    & \mbox{\quad {\rm if $S\notin K$ }}  \label{eq:KKKK-1}\\
z(T) < y([n])    & \mbox{\quad {\rm if $T\in K$ }}  \label{eq:KKKK-2}\\
z > 0 &       \label{eq:KKKK-3}
\end{align}
From here we conclude that $K = Tr_{z<y([n])}$ is the threshold complex with the quota $y([n])$ and weight distribution $z$. \hfill$\square$

\bigskip\noindent
{\bf Proof of Theorem \ref{thm:canonical}:} Proposition \ref{prop:equivalence} covers the first half of the statement of Theorem \ref{thm:canonical}. The second half is covered by the proof of \cite[Corollary 20]{jevtic_bier_2022}.  \hfill $\square$

\section{Roughly weighted simple games}

The class of \emph{roughly weighted simple games} \cite{taylor_games_2000, gvozdeva_games_2011} is considerably larger then the class of weighted majority games.
Recall that both classes can be characterized in terms of ``trading transforms'', by the results of Elgot (1961), and Taylor and Zwicker (1992), for the weighted majority games, and Gvozdeva and Slinko (2011), for the roughly weighted games.

Our definition is slightly more restrictive, compared to \cite{taylor_games_2000, gvozdeva_games_2011}, in the sense that we use only strictly positive weights (in order to avoid the appearance of ghost vertices in $K$).

\begin{defin}\label{def:rough-game}
  A simple game $(P,W)$, where $K = 2^P\setminus W$ is the collection of losing coalitions, is called \emph{roughly weighted} if there exist strictly positive real numbers $w = (w_1,\dots, w_n)$ and a positive real number $q$ (called the quota) such that for each $X\in 2^{P}$
  \begin{align}
w(X) = \sum_{i\in X} w_i > q  & \quad \Rightarrow \quad X\in W \label{eq:R-2}\\
w(X) = \sum_{i\in X} w_i < q  & \quad \Rightarrow \quad X\in K \label{eq:R-1}
\end{align}
\end{defin}
As a consequence of Definition \ref{def:rough-game} we obtain the implications
\[
  w(Z) < 1-q  \, \Rightarrow \, Z\in K^\circ \quad \mbox{ \rm and } \quad Z\in K^\circ \, \Rightarrow \, w(Z)\leq 1-q
\]
which say that if $(P,W)$ is roughly weighted, $2^P \setminus K^\circ$ is also a roughly weighted game.

\medskip
Note that by \ref{def:rough-game}, the set $\{X\in 2^P \mid w(X) = q\}$ is a antichain (clutter) in the sense that if $w(X_1) = w(X_2) = q$ then neither $X_1\varsubsetneq  X_2$ nor $X_1\varsupsetneq X_2$.

\medskip
We approach the proof of Theorem \ref{thm:rough-canonical} by formulating non-strict analogues of the key definitions and propositions from Section \ref{sec:canonical}.

\begin{defin}\label{def:LKK-submodular}
Let $K\subsetneq 2^{[n]}$ be a simplicial complex and $Bier(K)$ the associated Bier sphere. A \emph{non-strict $K$-submodular function} is a function $$f : Vert(Bier(K))\rightarrow \mathbb{R}$$ such that
\begin{align}
f(c_1) + f(c_2) + \Sigma_{i\in X} f(i) \geqslant \Sigma_{j\notin Y} f(\bar{j}) & \mbox{\quad {\rm for each $\Lambda$-configuration} } \label{eq:LK-1}\\ f(\bar{c}_1) + f(\bar{c}_2) + \Sigma_{j\notin Y} f(\bar{j}) \geqslant \Sigma_{i\in X} f(i) & \mbox{\quad {\rm for each $V$-configuration} } \label{eq:LK-2}\\
f(c_2)> 0 \quad f(\bar{c}_2) > 0 & \mbox{\quad {\rm for each $X$-configuration}.} \label{eq:LK-3}
\end{align}
\end{defin}

\begin{prop}\label{prop:rough-1} Suppose that $(P,W)$ is a roughly weighted game and let $K=2^P\setminus W$ be the associated set of losing coalitions. Then  $Bier(K)$ admits a non-strict $K$-submodular function.
\end{prop}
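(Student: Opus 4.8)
The plan is to mimic the structure of the proof of Proposition \ref{prop:equivalence}, replacing the strict $K$-submodular system by the non-strict one from Definition \ref{def:LKK-submodular}, and then to exhibit an explicit non-strict $K$-submodular function directly from a rough weight system $(w,q)$. Concretely, suppose $(P,W)$ is roughly weighted with strictly positive weights $w_1,\dots,w_n$ and quota $q>0$; after rescaling we may assume $w([n])=1$, so that $0<q<1$. I would then set $x_i := w_i$ and $y_i := \lambda\, w_i$ for a single scalar $\lambda>0$ to be chosen, which corresponds (via $f(i)=x_i$, $f(\bar i)=y_i$, as in Proposition \ref{prop:equivalence}) to a candidate function $f$. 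Recall from the excerpt that $Z\in K^\circ \Rightarrow w(Z)\le 1-q$, and dually $Z\notin K^\circ \Rightarrow w(Z)\ge 1-q$ (equivalently $[n]\setminus Z\notin W$, i.e.\ $w([n]\setminus Z)\le q$ fails to be strict only on the quota antichain). So both $W$ and $2^P\setminus K^\circ$ are roughly weighted, a fact already recorded after Definition \ref{def:rough-game} that I would use.

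The key computation is to verify the three families of non-strict inequalities in the symmetric form (\ref{eq:KK-1})--(\ref{eq:KK-3}) with "$>$" weakened to "$\geqslant$" in (\ref{eq:LK-1})--(\ref{eq:LK-2}). For a $\Lambda$-configuration $(X_1,\{c_1,c_2\},X_2)$ we have $X_1\cup\{c_1,c_2\}=Y\notin K$, i.e.\ $Y\in W$, hence $w(Y)\ge q$; we must check $x(Y) \ge y(X_2) = \lambda\,w(X_2) = \lambda(1-w(Y))$, i.e.\ $w(Y)\ge \lambda(1-w(Y))$, i.e.\ $w(Y)\ge \lambda/(1+\lambda)$. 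This holds for all $Y\in W$ provided $\lambda/(1+\lambda)\le q$, i.e.\ $\lambda \le q/(1-q)$. Dually, for a $V$-configuration, $X_1=X$, $Y=X\cup\{c_1,c_2\}$, neither $X_1$ nor $X_2$ (pieces of it) lands in $K$ in the relevant way; more precisely $X_2 \in K^\circ$ and we want $y(X_2)+y(\{c_1,c_2\})\ge x(X_1)$, where $X_1\cup\{c_1,c_2\} = X_2^c$. Using $X_2\in K^\circ \Rightarrow w(X_2)\le 1-q$, i.e.\ $w(X_1\cup\{c_1,c_2\}) = w(X_1) + w(\{c_1,c_2\}) \ge q$, and writing the inequality as $\lambda\,w(X_2^{c\,c}) \ge w(X_1)$ — I will need to track the indices carefully here — it reduces again to a bound of the form $\lambda \ge (1-q)/q$ or $\lambda \ge \text{something}$, coming from the dual rough-weightedness of $2^P\setminus K^\circ$. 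The constraint (\ref{eq:LK-3}) is automatic since $x_i = w_i>0$ and $y_i=\lambda w_i>0$. The main point is that the two bounds on $\lambda$ are of the form $\lambda \le q/(1-q)$ and $\lambda \ge (1-q)/q$ — wait, those are inconsistent unless $q = 1/2$, so I expect instead that the correct choice is $\lambda = q/(1-q)$ (or its reciprocal), making both families of inequalities hold as equalities exactly on the quota antichain and strictly elsewhere; the slack in the non-strict inequalities is precisely what the weakening from "$>$" to "$\geqslant$" buys us.

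I would then invoke the non-strict analogue of the Lemma inside Proposition \ref{prop:equivalence}: the system (\ref{eq:LK-1})--(\ref{eq:LK-3}) for $Bier(K)$ is equivalent to the "clean" non-strict system
\begin{align*}
x(S) &\geqslant y(S^c) \quad \text{if } S\notin K,\\
x(T) &\leqslant y(T^c) \quad \text{if } T\in K,\\
x&>0, \quad y>0,
\end{align*}
by the same monotonicity/minimal-cardinality reduction argument (passing from a general non-simplex to a boundary $\Lambda$-ridge it contains, and using positivity of the weights to go up and down along inclusions). Substituting $x_i=w_i$, $y_i = \lambda w_i$ with $\lambda = q/(1-q)$ and using $w([n])=1$, this clean system becomes exactly: $w(S)\ge q$ whenever $S\in W$, and $w(T)\le q$ whenever $T\in K$, which is the defining property (\ref{eq:R-2})--(\ref{eq:R-1}) of a roughly weighted game (the strict implications in the definition give the non-strict inequalities on the antichain $\{w(X)=q\}$). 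Hence $f$ with $f(i)=w_i$, $f(\bar i) = \frac{q}{1-q} w_i$ is a non-strict $K$-submodular function.

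\textbf{Main obstacle.} The delicate bookkeeping step is matching the index sets in the $\Lambda$- and $V$-configuration inequalities with the complementation underlying $K^\circ$, and confirming that a \emph{single} scalar $\lambda$ (rather than two independent scalings for the two configuration types) simultaneously satisfies both one-parameter families — i.e.\ that the two derived bounds on $\lambda$ are compatible and met by $\lambda = q/(1-q)$. I expect this to work precisely because the rough-weightedness of $W$ and of $2^P\setminus K^\circ$ are "dual" statements with the same quota $q$ after normalization, but writing it out cleanly (and checking the boundary/clutter cases where $w(X)=q$ exactly, which is where non-strictness is essential) is where the real care is needed.
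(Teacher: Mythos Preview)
Your approach is correct and essentially the same as the paper's: your candidate $f(i)=w_i,\ f(\bar i)=\tfrac{q}{1-q}w_i$ is just the paper's choice $f(i)=(1-q)w_i,\ f(\bar j)=qw_j$ rescaled by the positive constant $1/(1-q)$, and both reduce the $\Lambda$- and $V$-inequalities to $w(Y)\ge q$ for $Y\notin K$ and $w(X)\le q$ for $X\in K$. The paper simply writes down this $f$ and verifies the two inequalities in one line each, whereas you take a detour through ``finding $\lambda$'' and the clean-system lemma (which is unnecessary in this direction, since the ridge inequalities are special cases of the clean ones); your momentary worry that the $V$-bound is $\lambda\ge(1-q)/q$ is a slip --- the correct bound is $\lambda\ge q/(1-q)$, meeting the $\Lambda$-bound exactly at $\lambda=q/(1-q)$.
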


\begin{proof} Put $P=[n]$ and assume that $(P,W)$ is a roughly weighted game with parameters $[q; w_1,\dots, w_n]$.  Without loss of generality we assume that $w([n])=1$ and $0<q<1$.

Let us show that the function  $f : [n]\cup [\bar{n}]\rightarrow \mathbb{R}$, where $[n]\cup [\bar{n}] = Vert(Bier(K)) = \{1,\dots, n, \bar{1}, \dots, \bar{n}\}$, defined by
\begin{equation}\label{eq:Lf-threshold}
  f(i) = (1-q)w_i \qquad f(\bar{j}) = q w_j \qquad (i,j = 1,\dots, n)
\end{equation}
is indeed non-strictly $K$-submodular for $Bier(K)$. The inequalities \eqref{eq:LK-1} and \eqref{eq:LK-2}, for the function $f$ defined by \eqref{eq:Lf-threshold}, take  the following form
\begin{equation}\label{eq:both}
  (1-q)w(Y) \geqslant qw(Y^c)  \qquad (1-q)w(X)\leqslant qw(X^c) \, .
\end{equation}
However, in a roughly weighted game both inequalities  \eqref{eq:both} hold without any restrictions on a simplex $X\in K$ and a non-simplex $Y\notin K$. For example the second inequality in \eqref{eq:both} is a consequence of $w(X)\leqslant q$ and $w(X^c) \geqslant 1-q$.
\end{proof}

\begin{prop}\label{prop:rough-2}  Suppose that  $Bier(K)$ admits a non-strict $K$-submodular function.   Then $(P,W)$ is a roughly weighted game where $W=2^P\setminus K$ is the associated set of winning coalitions.
\end{prop}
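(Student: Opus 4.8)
The plan is to transcribe, line for line, the proof of Proposition~\ref{prop:equivalence}, systematically weakening every strict inequality to its non‑strict counterpart; the reward is that the closing algebraic manipulation now yields a roughly weighted representation in the sense of Definition~\ref{def:rough-game} rather than an exact threshold representation. As throughout this section I assume $K$ has no ghost vertices (equivalently $Vert(K)=[n]$, the standing hypothesis of Theorem~\ref{thm:rough-canonical}). Writing $x_i:=f(i)$ and $y_j:=f(\bar{j})$ for a non-strict $K$-submodular $f$, and translating the inequalities \eqref{eq:LK-1}--\eqref{eq:LK-3} into the symmetric ridge notation of Proposition~\ref{prop:ridges-2} exactly as \eqref{eq:KK-1}--\eqref{eq:KK-3} were obtained from \eqref{eq:K-1}--\eqref{eq:K-3}, the hypothesis becomes: the pair $(x,y)$ solves the system
\begin{align*}
x_{c_1}+x_{c_2}+\sum_{i\in X_1}x_i \;\geqslant\; \sum_{j\in X_2}y_j &\quad\text{for each $\Lambda$-ridge},\\
y_{c_1}+y_{c_2}+\sum_{j\in X_2}y_j \;\geqslant\; \sum_{i\in X_1}x_i &\quad\text{for each $V$-ridge},\\
x_{c_2}>0,\quad y_{c_2}>0 &\quad\text{for each $X$-ridge}.
\end{align*}

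Next I would establish the non-strict analogue of the Lemma in Section~\ref{sec:canonical}: this system is solvable if and only if the \emph{saturated} system
\begin{align*}
x(S)\geqslant y(S^c) &\quad (S\notin K),\\
x(T)\leqslant y(T^c) &\quad (T\in K),\\
x>0,\quad y>0 &
\end{align*}
is solvable. One implication is immediate, since the $\Lambda$- and $V$-inequalities are instances of the first two lines and positivity forces the $X$-inequalities. For the converse one runs the same monotone ``saturation'' argument as in Proposition~\ref{prop:equivalence}: given $S\notin K$, choose a minimal non-face $S_1\subseteq S$; since $K$ has no ghost vertices $|S_1|\geqslant 2$, so $S_1=S_2\cup\{c_1,c_2\}$ is a $\Lambda$-ridge with $S_2\in K$, and non-negativity of $x,y$ propagates $x(S_1)\geqslant y(S_1^c)$ upward to $x(S)\geqslant x(S_1)\geqslant y(S_1^c)\geqslant y(S^c)$; dually, for $T\in K$ one picks a facet $T_1\supseteq T$ and, when $|[n]\setminus T_1|\geqslant 2$, extracts a $V$-ridge above $T$ to get $x(T)\leqslant x(T_1)\leqslant y(T_1^c)\leqslant y(T^c)$. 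The delicate points here are the step I expect to be the main obstacle: (i) upgrading the positivity of $f$, which Definition~\ref{def:LKK-submodular} only asserts at $X$-ridge vertices, to genuine strict positivity $x>0$, $y>0$ of the whole vector — this is needed because Definition~\ref{def:rough-game} insists on strictly positive weights — which I would handle by a ``recentering'' argument in the spirit of Corollary~\ref{cor:vazno} and Remark~\ref{rem:feasibility}, translating the ambient (pseudo-)polytope so that the origin lies in its interior; and (ii) the degenerate ridge configurations, namely a non-face equal to all of $[n]$ and a face of the form $[n]\setminus\{c\}$ above which no $V$-ridge sits, which must be disposed of by a short direct check.

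Finally, armed with a solution $(x,y)$ of the saturated system, I would set $z:=x+y$ and $q:=y([n])$. Because $y(S^c)=y([n])-y(S)$ and $y(T^c)=y([n])-y(T)$, the first two lines rewrite as $z(S)\geqslant q$ for $S\notin K$ and $z(T)\leqslant q$ for $T\in K$, while $z>0$ and $q=y([n])>0$. Taking contrapositives, $z(X)>q$ forces $X\notin K$, i.e.\ $X\in W$, and $z(X)<q$ forces $X\in K$; these are precisely \eqref{eq:R-2} and \eqref{eq:R-1} with weight vector $z$ and quota $q$, so $(P,W)$ is a roughly weighted game with all weights strictly positive, which together with Proposition~\ref{prop:rough-1} completes the proof of Theorem~\ref{thm:rough-canonical}. \hfill$\square$
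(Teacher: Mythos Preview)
Your proposal is correct and follows essentially the same route as the paper: the paper's proof simply says ``analogous to the proof of Proposition~\ref{prop:equivalence} with the strict inequalities replaced by their non-strict counterparts,'' arrives at the system $z(S)\geqslant y([n])$ for $S\notin K$, $z(T)\leqslant y([n])$ for $T\in K$, $z>0$, and reads off the roughly weighted representation by contraposition. You have in fact been more careful than the paper in flagging the two delicate points (upgrading positivity from $X$-ridge vertices to the full vector, and the degenerate ridge configurations), which the paper passes over in silence.
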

\begin{proof}
The proof is analogous to the proof of Proposition \ref{prop:equivalence} with the main difference that the strict inequalities in (\ref{eq:KK-1})-(\ref{eq:KK-2})-(\ref{eq:KK-3}) (and elsewhere) are replaced by the inequalities (\ref{eq:LK-1})-(\ref{eq:LK-2})-(\ref{eq:LK-3}). As a consequence we obtain the inequalities:
\begin{align}
z(S) \geqslant y([n])    & \mbox{\quad {\rm if $S\notin K$ }}  \label{eq:LKKKK-1}\\
z(T) \leqslant y([n])    & \mbox{\quad {\rm if $T\in K$ }}  \label{eq:LKKKK-2}\\
z > 0 &       \label{eq:LKKKK-3}
\end{align}
We conclude that $(P,W)$ is a roughly weighted game with quota $y([n])$ and weights $z$. Indeed,  (\ref{eq:R-1}) is precisely the counterposition of the implication (\ref{eq:LKKKK-1}) while (\ref{eq:R-2}) is the counterposition of the implication (\ref{eq:LKKKK-2}).
\end{proof}

\subsection{Proof of Theorem \ref{thm:rough-canonical}}

Propositions \ref{prop:rough-1} and \ref{prop:rough-2} together imply that $(P,W)$ is a roughly weighted game with $K=2^P\setminus W$ as the associated set of losing coalitions if and only if $Bier(K)$ admits a non-strict $K$-submodular function. In light of this equivalence Theorem \ref{thm:rough-canonical} is a consequence of the following proposition.

\begin{prop}\label{prop:deformation}
Let $K\subsetneq 2^{[n]}$ be a proper simplicial complex without ghost vertices. Then $Bier(K)$ admits a non-strict $K$-submodular function (Definition \ref{def:LKK-submodular}) if and only if the canonical fan $Fan(\Gamma)$ of $\Gamma$ is \emph{pseudo-polytopal} in the sense that it refines the normal fan of a convex polytope.
\end{prop}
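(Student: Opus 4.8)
The plan is to relate the existence of a non-strict $K$-submodular function to a limiting (closure) version of Theorem~\ref{thm:K-submodular}. The key observation is that the non-strict inequalities \eqref{eq:LK-1}--\eqref{eq:LK-2}, together with the strict positivity constraints \eqref{eq:LK-3}, describe exactly the closure of the (open) feasibility region of the strict system \eqref{eq:K-1}--\eqref{eq:K-3} intersected with the open orthant $\{f(c)>0, f(\bar c)>0\}$ — or more precisely, its ``partial closure'' in which one closes up the $\Lambda$- and $V$-inequalities but keeps the $X$-inequalities open. So I would first record, via the same reduction as in Proposition~\ref{prop:equivalence} and Proposition~\ref{prop:rough-2}, that a non-strict $K$-submodular function exists iff the system \eqref{eq:LKKKK-1}--\eqref{eq:LKKKK-3} is solvable, i.e.\ iff $\Gamma$ is roughly weighted; this half is already done in the two preceding propositions and needs only to be cited.

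The substance is the geometric equivalence: a non-strict $K$-submodular function exists if and only if $Fan(\Gamma)$ refines the normal fan of some polytope. For the direction ``non-strict submodular $\Rightarrow$ pseudo-polytopal'': given such an $f$, assign to the ray $\delta_i$ the height $f(i)$ and to $\bar\delta_j$ the height $f(\bar j)$, and form the polytope $P_{\mathbf h}=\{x\in H_0 \mid \mathbf G x\le \mathbf h\}$ as in Proposition~\ref{prop:wall-crossing}. Because the wall-crossing inequalities \eqref{eqn:wall-inequality} hold only weakly across the $\Lambda$- and $V$-walls (and strictly across the $X$-walls, by \eqref{eq:LK-3}), each defining inequality of $P_{\mathbf h}$ is still valid but some adjacent maximal cones of $Fan(\Gamma)$ get ``merged'' into a single normal cone of $P_{\mathbf h}$ — namely precisely across those $\Lambda$/$V$ walls where equality holds. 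The resulting normal fan $\mathcal N(P_{\mathbf h})$ is therefore a coarsening of $Fan(\Gamma)$, which is the assertion that $Fan(\Gamma)$ refines it. One must check that $P_{\mathbf h}$ is genuinely full-dimensional in $H_0$ and that no ray is ``lost'' (each $\delta_i,\bar\delta_j$ still indexes a nonempty face): this follows from the strict positivity in \eqref{eq:LK-3}, which guarantees that both halves of every $X$-wall survive and hence every ray remains on the boundary.

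For the converse, suppose $Fan(\Gamma)$ refines $\mathcal N(Q)$ for some polytope $Q\subseteq H_0$; writing $Q=\{x\in H_0\mid \langle \delta_i,x\rangle\le a_i,\ \langle\bar\delta_j,x\rangle\le b_j\}$ with all $2n$ facet inequalities present (possible because all $2n$ rays of $Fan(\Gamma)$ survive in the refinement, as $Fan(\Gamma)$ itself has exactly these rays and a refinement cannot drop rays of the coarser fan), set $f(i)=a_i$, $f(\bar j)=b_j$. Then validity of each facet inequality of $Q$ on the corresponding vertex of $Bier(K)$'s starshaped realization yields the weak inequalities \eqref{eq:LK-1}--\eqref{eq:LK-2} exactly as in the proof of Theorem~\ref{thm:K-submodular}, now in non-strict form because adjacent cones of $Fan(\Gamma)$ may share a normal cone of $Q$. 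Strict positivity \eqref{eq:LK-3} can be arranged by translating $Q$ so the origin lies in its interior (as in the proof of Corollary~\ref{cor:vazno}), which makes every $a_i=\langle\delta_i,\cdot\rangle$-support and every $b_j$ strictly positive. I expect the main obstacle to be the bookkeeping in the forward direction: carefully verifying that the weak wall-crossing inequalities force exactly the predicted merging pattern of cones and that no further (unwanted) degeneracy occurs — in particular that $P_{\mathbf h}$ does not collapse to lower dimension and that the correspondence ``equality in \eqref{eq:LK-1}/\eqref{eq:LK-2} $\leftrightarrow$ wall $(X_1,X_2)$ removed from $Fan(\Gamma)$'' is a genuine bijection onto the walls merged in passing to $\mathcal N(P_{\mathbf h})$. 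This amounts to reproving the ``moreover'' part of Theorem~\ref{thm:K-submodular} in a relative/degenerate setting, and is where the details will need the most care.
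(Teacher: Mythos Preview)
Your approach is essentially the same as the paper's: both directions go through the non-strict wall-crossing criterion, interpreted via a piecewise-linear function on $H_0$ that is linear on each maximal cone of $Fan(\Gamma)$ and takes the values $f(i),f(\bar j)$ on the ray generators. The paper packages this as a separate general lemma (its Proposition~\ref{prop:wall-crossing-non-strict}): for any essential complete simplicial fan and any strictly positive height vector, the weak wall-crossing inequalities hold iff the associated piecewise-linear function is convex iff it is the support function of a polytope $Q$ with $\mathcal F$ refining $\mathcal N(Q)$. The equivalence $(0)\Leftrightarrow(2)$ is handled by citing the argument of Chapoton--Fomin--Zelevinsky, which spares you the ``bookkeeping'' you anticipate.

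There is one genuine slip in your converse direction. You write $Q$ with ``all $2n$ facet inequalities present'' and justify this by saying a refinement cannot drop rays of the coarser fan. That reasoning is backwards: $Fan(\Gamma)$ is the \emph{finer} fan, and the coarser fan $\mathcal N(Q)$ may well have strictly fewer rays --- that is precisely what happens when a $\Lambda$- or $V$-wall collapses. So some $\delta_i$ or $\bar\delta_j$ need not be a facet normal of $Q$ at all. The correct move (and what the paper does implicitly through the support-function viewpoint) is to set $f(i)=h_Q(\delta_i)$ and $f(\bar j)=h_Q(\bar\delta_j)$, the support-function \emph{values}, regardless of whether these directions are facet normals. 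Then the weak inequalities \eqref{eq:LK-1}--\eqref{eq:LK-2} follow from convexity of $h_Q$, and after translating $Q$ so that $0$ is interior one gets all values strictly positive, in particular \eqref{eq:LK-3}. Similarly, in the forward direction your worry that ``no ray is lost'' is misplaced: rays \emph{are} allowed to be lost in passing to $\mathcal N(P_{\mathbf h})$; what you actually need is that $P_{\mathbf h}$ is full-dimensional, and this follows (after a translation as in Corollary~\ref{cor:vazno}) from convexity of the piecewise-linear function together with the strict inequalities across $X$-walls.
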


This proposition is a consequence of the following non-strict relative of Proposition \ref{prop:wall-crossing}.

\begin{prop}\label{prop:wall-crossing-non-strict}
Let $\mathcal{F}$ be an essential complete simplicial fan in
$\mathbb{R}^n$ and let $\Delta$ be a set of vectors representing $1$-dimensional cones in $\mathcal{F}$.  Let $\{\mathbf{{h}_t}\}_{\mathbf{t}\in \Delta}$ be a collection of strictly positive real numbers and let $F : V \rightarrow \mathbb{R}$ be a continuous function which is linear on each maximal cone of $\mathcal{F}$  such that $F(\mathbf{t}) = \mathbf{h_t}$ for each $t\in\Delta$.    Then the following are equivalent:
 \begin{enumerate}[label=(\arabic*)]

     \item[{\rm (0)}] The function $F$ is convex.
     \item[{\rm (1)}] The function $F$ is  the support function of a (unique) convex polytope $Q$ in $V$, such that the fan  $\mathcal{F}$ is a refinement of the normal fan $\mathcal{N}(Q)$.
     \item[{\rm (2)}] For any two adjacent chambers $\mathbb{R}_{\geqslant
0}\mathbf{R}$ and $\mathbb{R}_{\geqslant 0}\mathbf{S}$ of $\mathcal{F}$
with $\mathbf{R}\setminus \{r\}=\mathbf{S}\setminus \{s\}$,
     \begin{align}\label{eqn:wall-inequality-2}
         \alpha\mathbf{h_r}+\beta\mathbf{h_s}+\sum_{\mathbf{t}\in
\mathbf{R}\cap \mathbf{S}} \gamma_{\mathbf{t}}\mathbf{h_t}\geqslant0,
     \end{align} where \begin{align}\label{eqn:wall-equality-2}
         \alpha\mathbf{r}+\beta\mathbf{s}+\sum_{\mathbf{t}\in
\mathbf{R}\cap \mathbf{S}} \gamma_{\mathbf{t}}\mathbf{t}=0
     \end{align}
     is  (up to scaling) the unique linear dependence with $\alpha,\beta>0$
between the vectors of $\mathbf{R} \cup \mathbf{S}$. \end{enumerate}
\end{prop}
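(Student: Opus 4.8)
The plan is to treat the three stated conditions in the cyclic order $(1)\Rightarrow(0)\Rightarrow(2)\Rightarrow(1)$, which is the cleanest routing and reuses the classical strict version (Proposition~\ref{prop:wall-crossing}) as a black box wherever possible. The implication $(1)\Rightarrow(0)$ is essentially by definition: a support function of a convex body is convex, and the restriction to $V$ of a convex function is convex. The implication $(0)\Rightarrow(2)$ is local. Fix two adjacent chambers $\mathbb{R}_{\geqslant 0}\mathbf{R}$ and $\mathbb{R}_{\geqslant 0}\mathbf{S}$ sharing a wall, with the unique positive linear dependence \eqref{eqn:wall-equality-2}. Convexity of $F$ across that wall means exactly that the piecewise-linear graph of $F$ over $\mathbb{R}_{\geqslant 0}(\mathbf{R}\cup\mathbf{S})$ does not "cave in" along the wall, and a short computation — evaluating the two linear pieces of $F$ at the point $\alpha\mathbf{r}+\beta\mathbf{s} = -\sum_{\mathbf{t}}\gamma_{\mathbf{t}}\mathbf{t}$, once as $\alpha\mathbf{h_r}+\beta\mathbf{h_s}$ and once as $-\sum_{\mathbf{t}}\gamma_{\mathbf{t}}\mathbf{h_t}$ — shows that convexity on that wall is equivalent to inequality \eqref{eqn:wall-inequality-2}. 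Doing this over every wall gives $(0)\Leftrightarrow(2)$ at the same time, which is convenient.

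The substantive direction is $(2)\Rightarrow(1)$, and here is where I would work hardest. The obstacle is that Proposition~\ref{prop:wall-crossing} requires \emph{strict} wall inequalities and delivers a polytope whose normal fan equals $\mathcal{F}$ on the nose; under the non-strict hypothesis the fan we want is only a \emph{coarsening}, because some walls of $\mathcal{F}$ will lie in the interior of a maximal cone of $\mathcal{N}(Q)$ (precisely the walls where \eqref{eqn:wall-inequality-2} holds with equality). So I cannot invoke Proposition~\ref{prop:wall-crossing} directly. Two routes are available. The \emph{perturbation route}: pick any $\mathbf{h}'$ strictly satisfying all the strict inequalities — this exists because $\mathcal{F}$ is a simplicial fan, so the all-ones-type vector or any sufficiently "round" height vector works (this is exactly the content that makes Corollary~\ref{cor:vazno} non-vacuous) — and then consider the segment $\mathbf{h}_\lambda := (1-\lambda)\mathbf{h} + \lambda\mathbf{h}'$ for small $\lambda>0$; each $\mathbf{h}_\lambda$ strictly satisfies \eqref{eqn:wall-inequality-2}, hence by Proposition~\ref{prop:wall-crossing} defines a polytope $P_{\mathbf{h}_\lambda}$ with normal fan exactly $\mathcal{F}$; letting $\lambda\to 0^+$, the support functions converge to $F$ (they agree on the rays by construction and are linear on the same cones), so $F$ is a limit of support functions of polytopes, hence itself the support function of a convex polytope $Q$ (limits of support functions are support functions, and the limiting body is a polytope because it is cut out by the finitely many facet inequalities that survive), and $\mathcal{F}$ refines $\mathcal{N}(Q)$ because every wall of $\mathcal{F}$ is either a wall of $\mathcal{N}(P_{\mathbf{h}_\lambda})$ for all $\lambda$ or degenerates in the limit. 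The \emph{direct route}, which I slightly prefer for self-containedness: set $Q := \{x\in V \mid \langle \mathbf{t}, x\rangle \leqslant \mathbf{h_t}\ \text{for all}\ \mathbf{t}\in\Delta\}$ and show directly that $F$ is the support function $h_Q$. One shows $h_Q \leqslant F$ trivially (every $x\in Q$ satisfies the defining inequalities, and $F$ restricted to a maximal cone is the linear functional $\langle v_\sigma, \cdot\rangle$ where $v_\sigma$ is forced to satisfy $\langle \mathbf{t}, v_\sigma\rangle = \mathbf{h_t}$ on the rays of $\sigma$; the wall inequalities \eqref{eqn:wall-inequality-2} are exactly what is needed to propagate from one maximal cone to its neighbour the fact that $v_\sigma\in Q$, by the same evaluation-at-the-wall computation as above — an induction on the dual graph of $\mathcal{F}$, which is connected since the fan is complete), whence $v_\sigma\in Q$ gives $h_Q\geqslant\langle v_\sigma,\cdot\rangle = F$ on $\sigma$; combining, $F = h_Q$, and $\mathcal{F}$ refines $\mathcal{N}(Q)$ because $F$ is linear on each cone of $\mathcal{F}$.

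Finally I would note uniqueness: the polytope $Q$ in (1) is determined because its support function is $F$, and support functions determine convex bodies; strict positivity of the $\mathbf{h_t}$ guarantees the origin lies in the interior of $Q$ (so $Q$ is full-dimensional, consistent with $\mathcal{F}$ being essential) and that all the defining inequalities of $Q$ are relevant up to the coarsening. Plugging this Proposition into the already-established equivalence "roughly weighted $\Leftrightarrow$ non-strict $K$-submodular function exists" (Propositions~\ref{prop:rough-1} and \ref{prop:rough-2}) then yields Theorem~\ref{thm:rough-canonical}: the $K$-wall-crossing inequalities \eqref{eq:LK-1}--\eqref{eq:LK-3} are precisely the inequalities \eqref{eqn:wall-inequality-2} for the fan $\mathcal{F} = Fan(K)$, with $\Delta = \{\delta_i,\bar\delta_j\}$ and $\mathbf{h_t} = f(t)$, so a non-strict $K$-submodular function is the same data as a convex $F$, which by the Proposition is the same as $Fan(\Gamma)$ being pseudo-polytopal. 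The main obstacle, to reiterate, is the passage from strict to non-strict in $(2)\Rightarrow(1)$: one must produce the polytope honestly rather than quoting Proposition~\ref{prop:wall-crossing}, and take care that the limiting/intersection object is genuinely a polytope with the claimed refinement property.
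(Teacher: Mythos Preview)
Your direct route for $(2)\Rightarrow(1)$ is correct and is essentially what the paper does, only unpacked: the paper handles $(2)\Rightarrow(0)$ by citing Lemma~2.1 of \cite{chapoton_associahedron_2002}, whose content is precisely your dual-graph propagation argument showing every $v_\sigma$ lies in $Q$; the paper then treats $(0)\Rightarrow(1)$ separately via the standard fact that a convex, positively homogeneous, piecewise-linear $F$ is the support function of a polytope. So on this route you are in agreement with the paper, with the bonus that you spell out the CFZ step rather than outsourcing it.

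Your perturbation route, however, has a genuine gap at the stated level of generality. You assert that a strictly feasible $\mathbf{h}'$ exists ``because $\mathcal{F}$ is a simplicial fan, so the all-ones-type vector \dots\ works''. This is false: there exist essential complete simplicial fans that are not the normal fan of any polytope (equivalently, whose strict wall-crossing system is infeasible) --- indeed this is the whole point of the polytopality question for $Fan(K)$ in the paper. Corollary~\ref{cor:vazno} does not help here: it says only that \emph{if} a strict solution exists then a positive one exists, not that a strict solution exists unconditionally. So the perturbation argument cannot be carried out for arbitrary $\mathcal{F}$, and you should drop it (or restrict it to fans already known to be polytopal, which defeats the purpose). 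Relatedly, your remark that the local wall computation gives $(0)\Leftrightarrow(2)$ ``at the same time'' is too quick for the $(2)\Rightarrow(0)$ direction: local convexity across each wall does not tautologically yield global convexity; that is exactly the CFZ-type propagation you later supply in the direct route, and you should point to it there rather than claim it is immediate.
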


\begin{proof}
The implication ${\rm (1)} \Rightarrow {\rm (0)}$ is trivial. The implication ${\rm (0)} \Rightarrow {\rm (1)}$ follows from the observation that if the support function $F = h_Q$ of $Q$ is linear (and positive) on each maximal cone of $\mathcal{F}$, then $\mathcal{F}$ is a refinement of the normal fan $\mathcal{N}(Q)$.

 The implication ${\rm (0)} \Rightarrow {\rm (2)}$ follows from the observation that (\ref{eqn:wall-inequality-2}) is a consequence of convexity of $F$. Conversely, ${\rm (2)} \Rightarrow {\rm (0)}$ follows, mutatis mutandis, by the argument of the proof of Lemma 2.1 in \cite{chapoton_associahedron_2002}.
\end{proof}

%\subsection{ Characteristic polytope of a roughly weighted game}

\section{Proof of Theorem \ref{thm:Bier-10-vertices}}

Theorems \ref{thm:canonical} and \ref{thm:rough-canonical} provide a complete characterization of (roughly) weighted games (threshold complexes) in terms of the canonical polytopality (pseudo-polytopality) of the corresponding Bier spheres.

It is known \cite[Section 5.6]{matousek_using_2008} that with the increase of the number of vertices (number of players) Bier spheres tend to be nonpolytopal. In other words nonpolytopal Bier spheres certainly exist, but we do not know how far we should go to find them (Problem \ref{prob:unsolved})

Theorem \ref{thm:Bier-10-vertices}, and the corresponding algorithm for proving polytopality of Bier spheres, show that nonpolytopal Bier spheres must have at least 12  vertices. In particular the ``M\" obius Bier sphere'' (Section \ref{sec:revisited}) is polytopal, although (in light of Theorem \ref{thm:canonical}) it is not canonically polytopal, being a Bier sphere of a non-threshold complex.

\subsection{Polytpal Bier spheres of non-threshold complexes}

We describe an algorithm which tries to find an explicit polytopal realization of a given Bier sphere $Bier(K)$ by a sequence of modifications, where the initial step is the canonical polytopal realization of the Bier sphere $Bier(L)$ of a threshold complex $L$ (chosen to be as close to $K$ as possible).\medskip

\begin{figurehere}
\centering %\vspace{-2cm}
\includegraphics{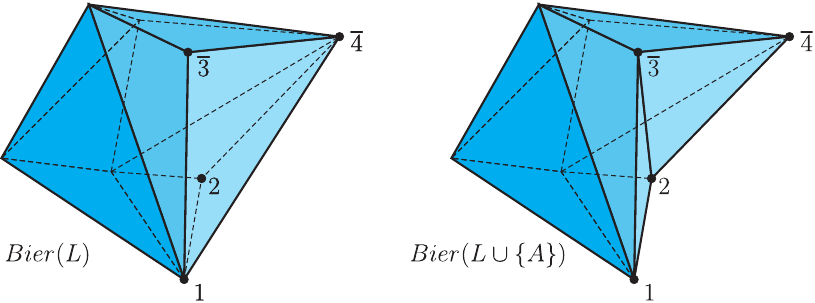}
 \caption{Re-triangulation of a disk on a Bier sphere.}
 \label{Figura:Bistelarna}
\end{figurehere}\medskip

Let $L\subseteq 2^{[n]}$ be a simplicial complex and $A\subseteq [n]$ such that $K=L\cup\{A\}$ is also a simplicial complex. Then
$$Bier(L\cup\{A\})=Bier(L)\setminus\big\{(A\setminus \{i\})\uplus A^c\mid i\in A\big\}\cup\big\{A\uplus (A\cup \{i\})^c\mid i\not\in A\big\}.$$
In other words $Bier(K)$ is obtained form $Bier(L)$ by a re-triangulation of a disk with vertices  $A\uplus A^c$, as illustrated by Figure \ref{Figura:Bistelarna}. A more detailed description of this operation can be found in \cite{matousek_using_2008}.\medskip

If $Bier(L)$ is isomorphic to the border of a convex polytope $\conv \{x_1,\ldots,x_n,y_1\ldots,y_n\}$ where vertices $1,\ldots,n$ of $Bier(L)$ correspond to $x_1,\ldots,x_n$ and vertices $\overline{1},\ldots,\overline{n}$ correspond to $y_1,\ldots,y_n$, one can try to obtain a geometrical realization of $Bier(K)$ by varying the points $x_i$, $i\in A$ and $y_i$, $i\in A^{c}$. Simplifying the calculation, we shall  vary the points $v\in \{x_1,\ldots,x_n,y_1\ldots,y_n\}$ radially along the lines $Ov$ and where $O$ is the origin. \medskip

A simplicial complex $\Sigma\subseteq 2^{[m]}$ is a triangulation of the border of a $d-$dimensional convex polytope $\conv \{v_1,\ldots,v_m\}$ if and only if for every maximal simplex (facet) $A$ of $\Sigma$, the points $v_j$, $j\not \in A$ are on the same side of a plane spanned by the points $v_i$, $i\in A$. If $\vec{A}=(i_1,\ldots,i_d)$ is a chosen orientation of the simplex $A$, then simplices $\vec{A}*j=(i_1,\ldots,i_k,j)$ have the same orientation for every $j\not\in A$. This can be verified geometrically in a following way, if $v_{i}=\{v_{i1},\ldots,v_{id}\}\in \mathbb{R}^d$ for $i=1,\ldots,m$ than, the orientation of a geometrical simplex $v_{\vec{A}*j}=(v_{i_1},\ldots,v_{i_{d}},v_j)$ is the sign of a following determinant:
$$[v_{\vec{A}*j}]=\left|
                    \begin{array}{cccc}
                      v_{i_1 1} & \cdots & v_{i_1 d} & 1 \\
                      \vdots    &        & \vdots    & \vdots  \\
                      v_{i_d 1} & \cdots & v_{i_d d} & 1 \\
                      v_{j_1 1} & \cdots & v_{j_d 1} & 1 \\
                    \end{array}
                  \right|
$$
Since the sphere is  orientable, we may assume that orientations of the maximal simplices $A\in Max(\Sigma)$ are chosen so that all determinants $[v_{\vec{A}*j}]$ have the same sign equal to one.\medskip

Let $v=\{v_1,\ldots,v_m\}$ be arbitrary configuration of points in an Euclidean space $\mathbb{R}^{d}$ and let $tv=\{t_1 v_1,\ldots,t_m v_m\}$ be a configuration obtained form $v$ by radially varying points $v_i$ trough multiplication by scalars $t_i \in \mathbb{R}$. If $\Sigma\subseteq 2^{[m]}$ is a triangulation of a $d-$dimensional sphere where the maximal simplices  $A\in Max(\Sigma)$ are suitably oriented then a convex geometrical realization of $\Sigma$ can be obtained trough radial variation of the vertexes $v$ iff the system of inequalities
\begin{equation}\label{eq:SISTEM}
 [tv_{\vec{A}*j}]>0,\quad A\in Max(\Sigma),\quad j\in A^c
 \end{equation}
has a solution.\medskip

The system (\ref{eq:SISTEM}) is non-linear, has $m$ real variables $t_1,\ldots,t_m$ and its solvability largely depends on starting configuration of points so, for the method to be efficient, one needs to start with points  which are sufficiently close to targeted geometrical realization.

\bigskip

Let $\mu:[n]\rightarrow \mathbb{R}^{+}$ be a probability measure given by a vector $(\mu_1,\ldots,\mu_n)$ and  $\alpha\in\mathbb{R}^{+}$ such that $\mu(A)\neq \alpha$ for all $A\subseteq [n]$ and let $L=Tr_{\mu<\alpha}=\{A\subseteq [n]\mid \mu(A)<\alpha\}$ be the associated threshold complex.
In \cite{jtz-bier-2019} was shown that $Bier(L)$ is isomorphic to the border of a convex polytope spanned by the points $x_1,\ldots,x_n,y_1,\ldots,y_n\in\mathbb{R}^{n-1}$ given by\medskip
\begin{equation}\label{eq:REALIZACIJA}
x_i=\frac{1}{\mu_i}b_i,\quad y_i=-\frac{1-\alpha}{\alpha}b_i,\quad i=1,\ldots,n
\end{equation}
where $b_1,\ldots,b_n\in\mathbb{R}^{n-1}$ is a ``minimal circuit'' i.e. a set of points such that each of its proper subset is linearly independent and $b_1+\cdots+b_n=0$. In this setting, vertices $1,\ldots,n$ of $L\subset Bier(L)$ are mapped to $x_1,\ldots,x_n$ and vertices $\overline{1},\ldots,\overline{n}$ of $L^{\circ}\subset Bier(L)$ are mapped to $y_1,\ldots,y_n$.\medskip

Let $K\subseteq 2^{[n]}$ be a non-threshold simplicial complex. To obtain a geometrical realization of $Bier(K)$, we start with geometrical configuration of a Bier sphere $Bier(L)$ where $L$ is a maximal subcomplex of $K$ which is threshold. To determine $L$, one can use the threshold characteristic of $K$ introduced in \cite{zivaljevic_unavoidable_2020}. Then,
$$K=L\cup\{A_1,\ldots,A_k\}$$
and if simplices  $A_1,\ldots,A_k$ are ordered increasingly by dimension, the simplicial complexes $K_0=L$, $K_i=K_{i-1}\cup\{A_i\}$, $i=1,\ldots,k$ form a sequence
$$L=K_{0}\subset K_1\subset K_2\subset \cdots \subset K_k=K$$
which corresponds to the sequence of Bier spheres
$$Bier(L)=Bier(K_0), Bier(K_1),\ldots, Bier(K_k)=Bier(K)$$
where each sphere $Bier(K_i)$ is obtained from $Bier(K_{i-1})$ by re-triangulating a disk with vertices  $A_i\uplus A_{i}^c$.\medskip

If $\conv v$ where $v=\{x_1,\ldots,x_n,y_1\ldots,y_n\}$ is a convex realization of $Bier(K_{i-1})$ then, to obtain a convex realization of $Bier(K_{i})$, we introduce a variable configuration $$tv=\{t_1 x_1,\ldots,t_n x_n,s_1 y_1\ldots,s_n y_n\}$$
where
\begin{align}
\nonumber t_i &\in \mathbb{R},\ i\in A,\quad \ \ t_i=1,\ i\in A^c,\\
\nonumber s_j &\in \mathbb{R},\  j\in A^c,\quad  s_j=1,\ j\in A,
\end{align}
and solve the system of inequalities (\ref{eq:SISTEM}) which now has $n$ variables.\medskip

\medskip
Let us demonstrate the method on one simple example. Let $K=2^{\{1,2\}}\cup 2^{\{3,4\}}$ be the simplicial complex whose geometric realization is the disjoint union of two segments. This simplicial complex is not threshold, it's maximal subcomplex which is threshold is $L=K\setminus \{A\}$ for $A=\{1,2\}$ where the required probability measure is given by the vector $\mu=(\frac{1}{3},\frac{1}{3},\frac{1}{6},\frac{1}{6})$ and threshold $\alpha=\frac{5}{12}$. If we choose a minimal circuit
$$b_1=(1,0,0),\ b_2=(0,1,0),\ b_3=(0,1,0),\ b_4=(-1,-1,-1) \, ,$$
by (\ref{eq:REALIZACIJA}) the sphere $Bier(L)$ is the boundary of the  convex polytope whose  vertices are
\begin{align}
\nonumber v=\bigg\{&(3,0,0),(0,3,0),(0,0,6),(-6,-6,-6),\\
\nonumber         &\left(-\frac{21}{5},0,0\right),\left(0,-\frac{21}{5},0\right),\left(0,0,-\frac{42}{5}\right),\left(\frac{42}{5},\frac{42}{5},\frac{42}{5}\right)\bigg\}
\end{align}
which is shown in  Figure \ref{Figura:Bistelarna} on the left. To construct geometrical realization of $Bier(K)$, we first  translate all points of $v$ for the vector $(3,3,0)$, since we want to perform radial variation (modification) of vertices
$1,2,\overline{3},\overline{4}$ in approximately the same direction, as indicated in  Figure \ref{Figura:Radijalno} on the left.\medskip

The associated  ``variable point configuration'' is the following
\begin{align}
\nonumber tv=\bigg\{& \left(6 t_1,3 t_1,0\right),\left(3 t_2,6 t_2,0\right),(3,3,6),(-3,-3,-6),\\
\nonumber           &\left(-\frac{6}{5},3,0\right),\left(3,-\frac{6}{5},0\right),\left(3 s_3,3 s_3,-\frac{42 s_3}{5} \right),\left(\frac{57 s_4}{5},\frac{57 s_4}{5},\frac{42 s_4}{5}\right)\bigg\}
\end{align}
and the system (\ref{eq:SISTEM}), for convex realization of the Bier sphere $Bier(K)$, obtains the following form:
\begin{align}
\nonumber 5 t_1-1>0,\ 5 t_2-1>0,\ 5 t_2 t_1+3 t_1-4 t_2>0,\ 5 t_2 t_1-4 t_1+3 t_2>0,\ 47 s_3-5>0,\\
\nonumber s_3 t_1-15 t_1+28 s_3>0,\ 12 s_3 t_1-5 t_1+7 s_3>0,\ 41 s_3 t_1+15 t_1-28 s_3>0,\\
\nonumber s_3 t_2-15 t_2+28 s_3>0,\ 12 s_3 t_2-5 t_2+7 s_3>0,\ 41 s_3 t_2+15 t_2-28 s_3>0,\\
\nonumber 5 t_1 t_2+7 t_1 s_3 t_2-4 s_3 t_2-4 t_1 s_3>0,\ -5 t_1 t_2+7 t_1 s_3 t_2+4 s_3 t_2+4 t_1 s_3>0,\ 47 s_4-5>0,\\
\nonumber s_4 t_1-15 t_1+28 s_4>0,\ 12 s_4 t_1-5 t_1+7 s_4>0,\ 41 s_4 t_1+15 t_1-28 s_4>0,\\
\nonumber s_4 t_2-15 t_2+28 s_4>0,\ 12 s_4 t_2-5 t_2+7 s_4>0,\ 41 s_4 t_2+15 t_2-28 s_4>0,\\
\nonumber 5 t_1 t_2+7 t_1 s_4 t_2-4 s_4 t_2-4 t_1 s_4>0,\ -5 t_1 t_2+7 t_1 s_4 t_2+4 s_4 t_2+4 t_1 s_4>0,\\
\nonumber 14 s_4 s_3+5 s_3-5 s_4>0,\ 14 s_4 s_3-5 s_3+5 s_4>0,\\
\nonumber 15 t_1 s_3+40 t_1 s_4 s_3-56 s_4 s_3+15 t_1 s_4>0,\ 5 t_1 t_2 s_3-8 t_1 s_4 s_3-8 t_2 s_4 s_3+5 t_1 t_2 s_4>0,\\
\nonumber 15 t_2 s_3+40 t_2 s_4 s_3-56 s_4 s_3+15 t_2 s_4>0 \, .
\end{align}

This system has a solution
$$(t_1,t_2,s_3,s_4)=\left(\frac{51}{25},\frac{44}{25},\frac{21}{20},\frac{131}{100}\right)$$
which, when inserted  in the variable point configuration,  provides a convex realization of $Bier(K)$, depicted in  Figure \ref{Figura:Radijalno} (on the right).

\begin{figure}
\centering %\vspace{-2cm}
\includegraphics{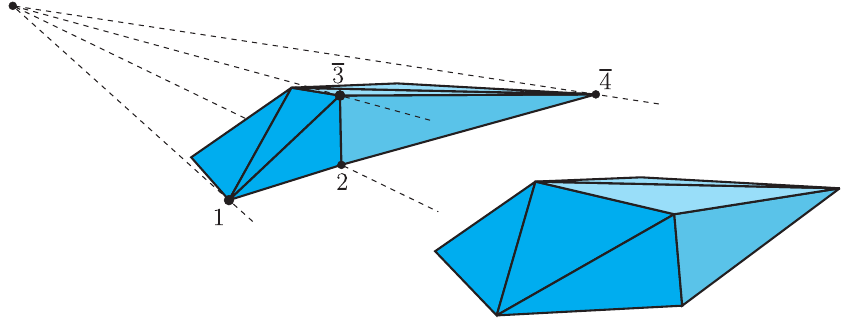}
 \caption{Radial variation of vertices.}
 \label{Figura:Radijalno}
\end{figure}\bigskip

For our second example, let us construct a convex geometric realization of the Bier sphere of the pentagonal cycle $K\subset 2^{[5]}$,  depicted in Figure \ref{Figura:Mebijus} on the left. The complex $K$ is not a threshold complex and  its Alexander dual is the minimal triangulation of the M\"{o}bius band, depicted in Figure \ref{Figura:Mebijus} on the right. For this reason $Bier(K)$ is sometimes referred to as the M\"{o}bius Bier sphere.

\medskip

We start with a complex $L=2^{[2]}\cup 2^{\{2,3\}}\cup \big\{\{4\},\{5\}\big\}$ which is a threshold subcomplex of a complex $K$ for the measure $\mu=\left\{\frac{3}{10},\frac{1}{50},\frac{1}{25},\frac{8}{25},\frac{8}{25}\right\}$ and the  threshold $\alpha=\frac{33}{100}$. If we choose a minimal circuit
$$b_1=(1,0,0,0),\quad b_2=(0,1,0,0),\quad b_3=(0,0,1,0),\quad b_4=(0,0,0,1), \quad b_5=-(1,1,1,1)$$ then by  (\ref{eq:REALIZACIJA})  the sphere $Bier(L)$ has a convex realization with vertices listed as the  rows of the following matrix

$$V_{Bier(L)}=\left[
\begin{array}{cccc}
 \frac{10}{3} & 0 & 0 & 0 \\
 0 & 50 & 0 & 0 \\
 0 & 0 & 25 & 0 \\
 0 & 0 & 0 & \frac{25}{8} \\
 -\frac{25}{8} & -\frac{25}{8} & -\frac{25}{8} & -\frac{25}{8} \\
 -\frac{670}{99} & 0 & 0 & 0 \\
 0 & -\frac{3350}{33} & 0 & 0 \\
 0 & 0 & -\frac{1675}{33} & 0 \\
 0 & 0 & 0 & -\frac{1675}{264} \\
 \frac{1675}{264} & \frac{1675}{264} & \frac{1675}{264} & \frac{1675}{264} \\
\end{array}
\right]$$

\begin{figurehere}
\centering %\vspace{-2cm}
\includegraphics{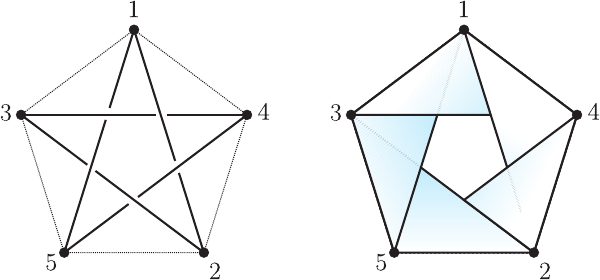}
 \caption{Pentagonal cycle and its dual, the M\"{o}bius band.}
 \label{Figura:Mebijus}
\end{figurehere}

\bigskip

Let $K_1=L\cup \big\{\{1,5\}\big\}$, $K_2=K_1\cup \big\{\{3,4\}\big\}$ and $K_3=K_2\cup \big\{\{4,5\}\big\}=K$ which  leads to the following sequence of successive approximations
$$L\subset K_1\subset K_2\subset K.$$

By applying the general method, and combining successive re-triangulations of disks on intermediate  Bier spheres with  radial variations of their vertices, and using numerical methods for solving inequalities (\ref{eq:SISTEM}), we finally obtain convex realizations of Bier spheres $Bier(K_1)$, $Bier(K_2)$ and $Bier(K)$. The matrices of their convex  realizations are:
$$V_{Bier(K_1)}=\left[
\begin{array}{cccc}
 3.37502 & -5.09311 & -2.85102 & -0.896848 \\
 0.0296025 & 50.7462 & 0.155815 & -0.368407 \\
 0.0296025 & 0.746166 & 25.1558 & -0.368407 \\
 0.0296025 & 0.746166 & 0.155815 & 2.75659 \\
 -3.07544 & -12.7875 & -8.32932 & -4.43607 \\
 -6.73807 & 0.746166 & 0.155815 & -0.368407 \\
 -0.0659424 & -52.6647 & 24.8672 & 3.97454 \\
 0.0087548 & 11.2174 & -45.1973 & 0.579216 \\
 0.0325764 & -0.747559 & -0.613352 & -6.84851 \\
 6.3743 & 7.09086 & 6.50051 & 5.97629 \\
\end{array}
\right]; $$
$$
V_{Bier(K_2)}=\left[
\begin{array}{cccc}
 3.38521 & -3.86036 & -4.47002 & -1.2378 \\
 0.0397926 & 51.9789 & -1.46319 & -0.709358 \\
 -0.0109639 & -3.9592 & 31.3401 & 0.930843 \\
 0.0118228 & -1.29333 & 2.82878 & 3.3205 \\
 -3.06525 & -11.5548 & -9.94833 & -4.77702 \\
 -6.71696 & 3.16451 & -3.01825 & -1.03684 \\
 -0.0557521 & -51.432 & 23.2482 & 3.63359 \\
 0.0189449 & 12.4502 & -46.8163 & 0.238265 \\
 0.0427665 & 0.485194 & -2.23236 & -7.18946 \\
 6.35039 & 4.02084 & 10.5314 & 6.82728 \\
\end{array}
\right];
$$
\begin{equation}\label{eq:poly-0}
V_{Bier(K_3)}=\left[
\begin{array}{cccc}
 2.58418 & -7.84043 & -7.2259 & -1.12488 \\
 -0.761242 & 47.9988 & -4.21908 & -0.59644 \\
 -0.811999 & -7.93927 & 28.5842 & 1.04376 \\
 -0.538235 & -4.02784 & 0.93516 & 3.39776 \\
 -2.77768 & -10.1347 & -8.96312 & -4.81456 \\
 -7.23817 & 0.568376 & -4.81495 & -0.963001 \\
 2.40876 & -39.1324 & 31.681 & 3.28216 \\
 2.3433 & 23.9615 & -38.7646 & -0.0884603 \\
 -0.758268 & -3.49488 & -4.98824 & -7.07654 \\
 5.54935 & 0.0407717 & 7.77551 & 6.9402 \\
\end{array}
\right]
\end{equation}

\medskip

By implementing the algorithm in some of the standard programming languages %\emph{Wolfram Mathematica} language,
and using numerical methods for solving the system (\ref{eq:SISTEM}), many convex geometrical realizations of higher dimensional Bier spheres of non threshold complexes where obtained.\medskip

\medskip
It turns out that in the case of $3$-dimensional Bier spheres on 10 vertices, there are $88$ non-threshold complexes on $5$ vertices and  $48$ non-isomorphic Bier spheres associated to them. Using described method, we where able to construct convex realizations of all these spheres which  led to the conclusion (Theorem \ref{thm:Bier-10-vertices}) that there are no nonpolytopal $3$-dimensional Bier spheres on $10$ vertices. The pdf file with the list of all complexes and coordinates of convex realizations of the corresponding Bier spheres is available for download at the address  \cite{kv5d3}.\medskip

\medskip
In case of $4$-dimensional Bier spheres on 11 vertices, all $88$ non-threshold complexes on $5$-vertices (viewed in the ambient $[6]$) have non-isomorphic Bier spheres and all of them are polytopal. Consequently, there are no nonpolytopal $4$-dimensional Bier spheres on $11$ vertices. The complexes and coordinates of convex realizations of their Bier spheres are given in \cite{kv5d4}.\medskip

As for the $4$-dimensional spheres on $12$ vertices, the algorithm was able to construct convex realizations of many of these spheres, but not all of them. The downside of the method is that there is significantly more non-threshold then threshold complexes with high number of vertices, so there is an insufficient number of suitable triangulations to initiate the algorithm.

\section{Bier sphere of the $5$-cycle revisited}\label{sec:revisited}

In this section we provide an independent verification that   $V_{Bier(K_3)}$ is indeed a correct convex realization of the $10$-vertex Bier sphere $Bier(C_5)$ associated to a $5$-cycle $C_5$. For this purpose we use the program {\emph{Polymake} \cite{polymake} and this may serve as an illustration how in principle all convex realizations listed in \cite{kv5d3, kv5d4}, can be verified.

The case of $Bier(C_5)$ is particularly interesting since, at an earlier stage of the project, there was some indication that it might be the desired nonpolytopal Bier sphere (Problem \ref{prob:unsolved}). As it turned out it is quite the opposite, by Theorem \ref{thm:Bier-10-vertices} all Bier spheres with 10 vertices are polytopal,

It is well-known that the $(C_5)^\circ$, Alexander dual  of $C_5$, is the minimal (5-vertex) triangulation of the M\" obius band, depicted in Figure \ref{fig:penta-1} on the left.

}
\begin{figure}[htb]
\centering
\includegraphics[scale=0.75]{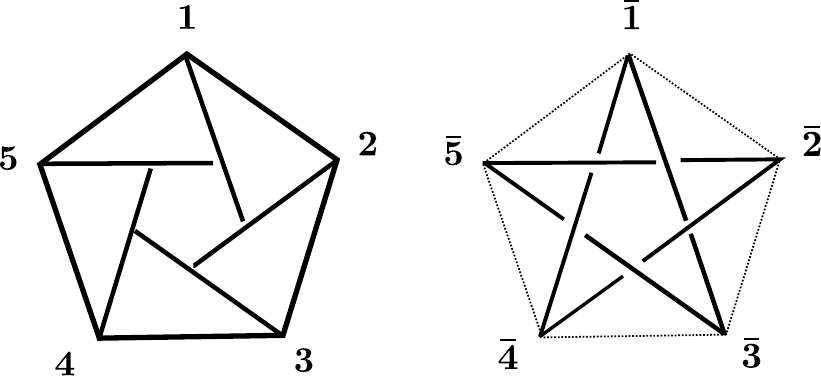}
\caption{Triangulated M\" obius band as a dual of a $5$-cycle.}
\label{fig:penta-1}
\end{figure}

Maximal simplices of $Bier(C_5)$ are four element subsets $A\cup \overbar{B}\subset [5]\cup [\bar{5}]$, where $A
\in (C_5)^\circ, B\in C_0$ and $A$ and $B$ are disjoint (as subsets of [5]).

\begin{figure}[htb]
\centering
\includegraphics[scale=0.75]{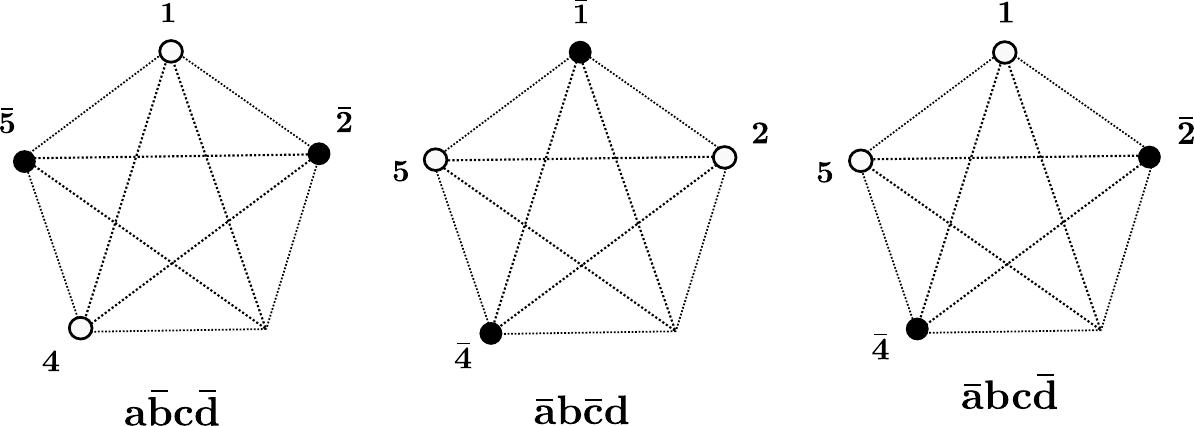}
\caption{Simplices of the type $(2,2)$.}
\label{fig:Tip-(2,2)}
\end{figure}

\begin{figure}[htb]
\centering
\includegraphics[scale=0.75]{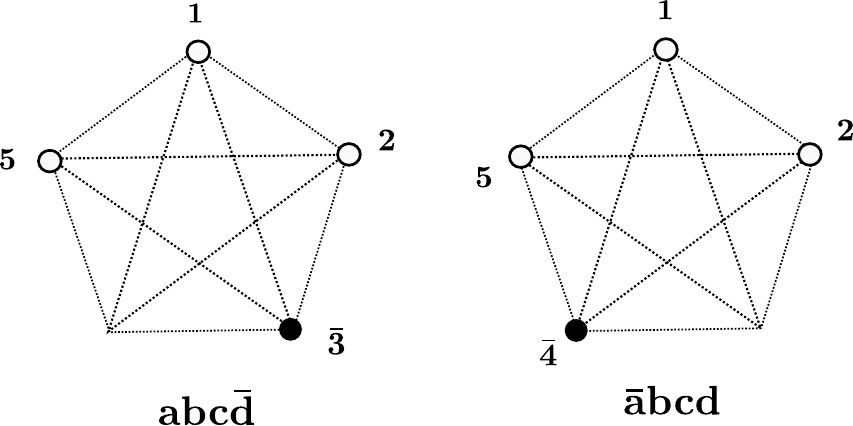}
\caption{Simplices of the type $(3,1)$.}
\label{fig:Tip-(3,1)}
\end{figure}

\subsection{The fundamental homology class of $Bier(C_5)$}

Here we list in the table (\ref{fund-class}) all maximal simplices (tetrahedra) of the Bier sphere $Bier(C_5)$. The simplices are  properly (coherently) oriented, so that their formal sum defines a fundamental homology class (cycle) in $H_3(Bier(C_5); \mathbb{Z})$.

 As illustrated in Figures \ref{fig:Tip-(2,2)} and \ref{fig:Tip-(3,1)}, the order of vertices is always clockwise, starting with the first vertex  after the ``missing vertex'' of the pentagon. The sign of each (ordered) simplex is indicated at the end of the corresponding row of (\ref{fund-class}).

 \begin{equation}\label{fund-class}
\begin{array}{llllll}
1\overbar{2}3\overbar{4} & \overbar{2}3\overbar{4}5 & 3\overbar{4}5\overbar{1} & \overbar{4}5\overbar{1}2 & 5\overbar{1}2\overbar{3} & + \\
\overbar{1}2\overbar{3}4 & 2\overbar{3}4\overbar{5} & \overbar{3}4\overbar{5}1 & 4\overbar{5}1\overbar{2} & \overbar{5}1\overbar{2}3 & +\\
\overbar{1}23\overbar{4} & \overbar{2}34\overbar{5} & \overbar{3}45\overbar{1} & \overbar{4}51\overbar{2} & \overbar{5}12\overbar{3} & +\\
123\overbar{4} & 234\overbar{5} & 345\overbar{1} & 451\overbar{2} & 512\overbar{3} &  - \\
\overbar{1}234 & \overbar{2}345 & \overbar{3}451 & \overbar{4}512 & \overbar{5}123  & - \, .
\end{array}
\end{equation}
The calculation of the signs is straightforward, however, for the reader's convenience, we include some details. By convention our boundary operator read as follows
\[
   \partial (a_1,a_2,\dots, a_n) = \sum_{i=1}^{n} (-1)^i (a_1,\dots, a_{i-1}, a_{i+1}, \dots, a_n) \, .
\]
Since $\partial [1\overbar{2}3\overbar{4}] = -[\overbar{2}3\overbar{4}] \pm \dots$ and $\partial [\overbar{2}3\overbar{4}5] = +[\overbar{2}3\overbar{4}] \pm \dots$, we conclude that $[1\overbar{2}3\overbar{4}]$ and $[\overbar{2}3\overbar{4}5]$ must be of the same sign, which we take to be positive. This argument accounts for the sign of all simplices in the first two rows of (\ref{fund-class}). Similarly, since $\partial [1\overbar{2}3\overbar{4}] = +[13\overbar{4}] \pm \dots$ and $\partial [123\overbar{4}] = +[13\overbar{4}] \pm \dots$ we observe that $[1\overbar{2}3\overbar{4}]$ and $[123\overbar{4}]$ have the opposite sign in the fundamental class of $Bier(C_5)$. This in turn determines the sign of all simplices  the last two rows of (\ref{fund-class}). Finally, since
\[
      \partial [123\overbar{4}] = - [23\overbar{4}]\pm\dots  \quad \mbox{and} \quad  \partial [\overbar{1}23\overbar{4}] = - [23\overbar{4}]\pm\dots
\]
we obtain the sign of all simplices in the third row of (\ref{fund-class}).

\subsection{Face lattice of the polytope (\ref{eq:poly-0}) }

Let $Q = Conv(V)$ be the four dimensional convex polytope, obtained as the convex hull of ten points (labeled as $0,1,\dots, 8, 9)$, which are listed as the rows of the matrix (\ref{eq:poly-0}).

\medskip
\emph{Polymake}, applied to the matrix (\ref{eq:poly-0}), produces as an output the face lattice of the polytope $Q$. All we need here is the list of all facets, recorded in the table (\ref{eq:poly-1}), and the list (\ref{eq:poly-2})  of all edges of the polytope $Q$.

\begin{equation}\label{eq:poly-1}
Facets(Q)=\left\lbrace
\begin{array}{ccccc}
{1 2 5 8} & {1 5 7 8} & {0 1 7 8} & {4 5 7 8} & {2 3 5 6} \\
{3 4 6 7} & {2 5 6 8} & {4 5 6 8} & {3 4 5 6} & {0 4 6 8} \\
 {0 4 7 8} & {0 4 6 7} & {3 4 5 7} & {3 5 7 9} & {0 6 7 9} \\
{0 6 8 9} & {2 6 8 9} & {3 6 7 9} & {2 3 5 9} & {2 3 6 9} \\
{1 2 8 9} & {1 2 5 9} & {1 5 7 9} & {0 1 8 9} & {0 1 7 9}
\end{array}
\right\rbrace
\end{equation}

\begin{equation}\label{eq:poly-2}
Edges(Q)=\left\lbrace
\begin{matrix}
{0 1} & {0 4} & {0 6} & {0 7} & {0 8} & {0 9} & {1 2} \\
{1 5} & {1 7} & {1 8} & {1 9} & {2 3} & {2 5} & {2 6} \\
{2 8} & {2 9} & {3 4} & {3 5} & {3 6} & {3 7} & {3 9} \\
{4 5} & {4 6} & {4 7} & {4 8} & {5 6} & {5 7} & {5 8} \\
{5 9} & {6 7} & {6 8} & {6 9} & {7 8} & {7 9} & {8 9}
\end{matrix}
\right\rbrace
\end{equation}

\medskip
We need to convince ourselves that the combinatorics of tables (\ref{fund-class}) and (\ref{eq:poly-1})  is essentially the same, i.e.\ that there exists a bijection
\[
 \sigma : \{1,2,3,4,5, \bar{1}, \bar{2}, \bar{3}, \bar{4}, \bar{5}\} \longrightarrow \{0, 1, 2, 3, 4, 5, 6, 7, 8, 9\}
\]
such that $abcd$ is listed in (\ref{fund-class}) if and only if $\sigma(a)\sigma(b)\sigma(c)\sigma(d)$ appears on the list (\ref{eq:poly-1}).

\medskip
In order to narrow the search for $\sigma$ we observe that each symbol (vertex) appears in the corresponding list either 8 or 12 times. For example the vertices $\{\bar{i}\}_{i=1}^5$ of the cycle $C_5$, depicted in Figure \ref{fig:penta-1} on the right, are precisely the elements that appear in the list (\ref{fund-class}) exactly 8 times, while the corresponding elements in (\ref{eq:poly-1}) are $\{0,1,2,3,4\}$.

\medskip
Moreover,  $\{01, 12, 23, 34, 04\}$ are the only pairs from $\{0,1,2,3,4\}$ which appear in (\ref{eq:poly-2}) while the edges of $C_5$ are $\{\bar{1}\bar{3}, \bar{3}\bar{5}, \bar{5}\bar{2}, \bar{2}\bar{4}, \bar{4}\bar{1}\}$. By taking into account the symmetries of $Bier(C_5)$, it is natural to define $\sigma^{-1}(i) = \overline{2i+1}\, (\mbox{mod } 5)$ for $0\leq i\leq 4$, or equivalently $\sigma(k) = 3k+2 \, (\mbox{mod } 5)$ for $1\leq k\leq 5$.

\medskip
There is natural bijection $i \leftrightarrow \bar{i}$ on the set of vertices of $B(C_5)$. The corresponding bijection on the vertices of $Q$ is $i \leftrightarrow i +5 \, (i=0,1,2,3,4)$.

\medskip
Indeed,  $i\bar{i}$ is never an edge in $B(C_5)$ and if $i\in \{0,1,2,3,4\}, j\in \{5,6,7,8,9\}$ then $ij$ is listed in (\ref{eq:poly-2}) if and only if $j\neq i+5$.
It follows that for $j\in \{5,6,7,8,9\}$ one has $\sigma^{-1}(j) = 2(j-5)+1 \, (\mbox{mod } 5)$.

\medskip
Finally, by inspection, we check that $\sigma$ is indeed an isomorphism of lists (\ref{fund-class}) and (\ref{eq:poly-1}).

\vspace{2cc}
\noindent
\textbf{Acknowledgments.} The second and the third author were supported by the Science Fund of the Republic of Serbia, Grant No.\ 7744592, Integrability and Extremal Problems in Mechanics, Geometry and Combinatorics - MEGIC.
%We are grateful to the referees for useful comments and suggestions how to improve the presentations of results in the paper.

%\nocite{*}

%\newpage
%\bibliographystyle{abbrv}
%\bibliography{ref}
%\bibliography{ref}
%\vspace{4cc}

%Fill author(s) affiliation(s), address(es) and emails here:
\bigskip
\noindent
\textbf{Marinko Timotijevi\'{c}} \\
University of Kragujevac, Faculty of Science\\ Radoja Domanovi\'{c}a 12,
Kragujevac, Serbia \\
E-mail: {\it timotijevicmarinko@yahoo.com}

\noindent
\textbf{Rade T. \v{Z}ivaljevi\'{c}}\\
Mathematical Institute,\\
Serbian Academy of Sciences and Arts,\\
Belgrade, Serbia,\\
E-mail: {\it rade@turing.mi.sanu.ac.rs}

\noindent
\textbf{Filip D. Jevti\'{c}} \\
Mathematical Institute, \\
Serbian Academy of Sciences and Arts,\\
Belgrade, Serbia,\\
E-mail: {\it filip@turing.mi.sanu.ac.rs}

\end{document}